\theoremstyle{plain}
\newtheorem{mainthm}{Theorem}
\newtheorem{thm}{Theorem}[section]
\newtheorem{lem}[thm]{Lemma}
\newtheorem{prop}[thm]{Proposition}
\theoremstyle{definition}
\newtheorem{rem}[thm]{Remark}
\newcommand{\RR}{{\mathbb{R}}}
\newcommand{\eqdef}{\stackrel{\scriptscriptstyle\rm def}{=}}
\DeclareMathOperator{\Fix}{Fix}
\let\oldtocsection=\tocsection
\let\oldtocsubsection=\tocsubsection
\renewcommand{\tocsection}[2]{\hspace{1.1em}\bf\oldtocsection{#1}{#2}}
\renewcommand{\tocsubsection}[2]{\hspace{1.8em}\oldtocsubsection{#1}{#2}}
\let\oldtocsubsubsection=\tocsubsubsection
\renewcommand{\tocsubsubsection}[2]{\hspace{4.2em}\it\oldtocsubsubsection{#1}{#2}}
\begin{document}
\title[Chaos near a reversible homoclinic bifocus]{
Chaos near a reversible homoclinic bifocus}
\author[Barrientos]{Pablo G. Barrientos}
\address{\centerline{Instituto de Matem\'atica e Estat\'istica, UFF}
   \centerline{Rua Prof. Marcos Waldemar de Freitas Reis, s/n, Niter\'oi,
   Brazil}}
\email{pgbarrientos@id.uff.br} \email{artem@mat.uff.br}

\author[Raibekas]{Artem Raibekas}
%\address{\centerline{Instituto de Matem\'atica e Estat\'istica, UFF}
%   \centerline{Rua Prof. Marcos Waldemar de Freitas Reis, s/n, Niter\'oi,
%   Brazil}}
\author[Rodrigues]{Alexandre A. P. Rodrigues}
\address{\centerline{Centro de Matem\'atica da Universidade do Porto}
\centerline{Faculdade de Ci\^encias da Universidade do Porto}
\centerline{Rua do Campo Alegre 687, 4169--007 Porto, Portugal}}
\email{alexandre.rodrigues@fc.up.pt}

%\subjclass[2000]{Primary: 37C29; Secondary: 34C28, 37C27, 37C20}
%\keywords{Bifocal homoclinic orbits, reversibility, super-homoclinic orbit, chaos, switching.}
%\thanks{ }

\begin{abstract}
We show that any neighborhood of a non-degenerate reversible
bifocal homoclinic orbit contains chaotic suspended invariant sets
on $N$-symbols for all $N\geq 2$. This will be achieved by showing
switching associated with networks of secondary homoclinic orbits.
We also prove the existence of \emph{super-homoclinic} orbits
(trajectories homoclinic to a network of homoclinic orbits), whose
presence leads to a particularly rich structure.
\end{abstract}
 \maketitle
% \tableofcontents
\thispagestyle{empty}
\section{Introduction}

A useful indicative of complicated behavior in a differential
equation is the presence of homoclinic orbits to a
hyperbolic equilibrium point. For instance, in any neighborhood of
a Shilnikov homoclinic orbit (a homoclinic orbit connected to a
saddle-focus in dimension three) one can find chaos. This chaotic
behavior is made clear by means of suspended horseshoes in any
number of symbols~\cite{S65,Tresser}. That is,
%for every $N\geq 2$ and any transverse section to the homoclinic orbit,
there are compact invariant hyperbolic sets of the Poincar\'e
return map whose dynamics are topologically conjugate to the Bernoulli
shift on $N$-symbols with $N\geq 2$. Similar results were also
obtained in the
literature~\cite{FS, IbRo, Shilnikov67A,Shilnikov70,Wiggins} for
non-degenerate bifocal homoclinic orbits in the non-resonant case.

Given an ordinary differential equation of the type
\begin{equation} \label{eqdif}
\dot{x}=f(x), \quad x\in \mathbb{R}^4,  \quad f\in C^r, \quad
r\geq 2,
\end{equation}
%Given an autonomous differential equation of the type
%$\dot{x}=f(x)$, $x\in \RR^4$, where $f$ is a smooth vector field,
a \emph{bifocal homoclinic orbit} $\gamma$ is a solution
bi-asymptotic to an equilibrium point $O$  such that $Df(O)$ has a
pair of eigenvalues $\alpha_{k}\pm i\omega _{k}$, $k=1,2$ with
$\alpha_{1}<0<\alpha_{2}$ and $\omega_1\cdot\omega_2\not=0$. This
equilibrium is often called by \emph{bifocus}.  The
\emph{non-resonant} case is given by the \emph{saddle index}
$\delta=|\alpha_1/\alpha_2|\not=1$ and $\gamma$ is said to be
\emph{non-degenerate} if the tangent spaces of the stable and
unstable manifold of $O$  have a trivial intersection (containing
the flow direction) at any point around the connection $\gamma$.
The resonant case $\delta=1$ includes Hamiltonian and reversible
systems.

The presence of chaos
in any neighborhood of a non-degenerate Hamiltonian bifocal
homoclinic orbit was obtained by Devaney in~\cite{Devaney76}. He
proved that, in the critical level set, one can find two-dimensional
suspended horseshoes on any number of symbols arbitrarily close to
the connection. This result was extended by Lerman
\cite{L91,L97,L00} for nearby level sets showing the existence of
infinitely many suspended two-dimensional horseshoes. In
particular,
%for every $N\geq 2$ and
in any neighborhood of a Hamiltonian bifocal homoclinic orbit
there are compact invariant sets of the Poincar\'e return map,
which are topologically conjugated to the shift
%of $N$-symbols
times the identity map
%on an interval contained the critical value
(see~\cite{BIR16}).

System~\eqref{eqdif}
%\begin{equation} \label{eqdif}
%\dot{x}=f(x), \quad x\in \mathbb{R}^4,  \quad f\in C^r \quad
%(r\geq 2)
%\end{equation}
is called \emph{reversible} if there is a linear map
$R:\mathbb{R}^4\to \RR^4$ with $R^2=\mathrm{id}$ and $\dim(\Fix
R)=2$ such that
 $f\circ R = - \, R\circ f$. Recall that
$\Fix R= \{x\in\mathbb{R}^4: Rx=x \}$.
 A trajectory of~\eqref{eqdif} is called
 \emph{reversible} or \emph{symmetric} if it is invariant under the involution $R$.
The similarity between reversible and Hamiltonian systems has been
demonstrated in many cases~\cite{D76}. For instance, both
reversible and Hamiltonian homoclinic orbits are accompanied by a
one-parameter family of periodic orbits~\cite{Devaney77}. However,
nothing is known in general about the presence of chaos in a
neighborhood of a reversible bifocal homoclinic orbit $\gamma$. % i.e.~satisfying $R(\gamma)=\gamma$. \
Homburg and Lamb studied in~\cite{HL2006} this situation under the
extra assumption that there is an orbit $\gamma_{a_0}$ in the
one-parameter family $\gamma_{a}$ of accompanying periodic orbits
to $\gamma$ whose stable and unstable manifolds intersect in a
reversible bi-asymptotic orbit $\rho_{a_0}$ to  $\gamma_{a_0}$.
%-- see hypothesis \textbf{(H3)} of \cite{HL2006}.
They
concluded that the non-wandering set of the return map describing
the dynamics near $\rho_{a_0}$ is contained in a set with a
lamination of one-dimensional leaves parameterized by a subshift
of finite type, that is similar as in the Hamiltonian case.

In
this paper,% we deal with the non-degenerate reversible
%homoclinic orbit to a bifocus described with detail in Section~\ref{sec2}. We
 will prove that in any neighborhood of a non-degenerate reversible bifocal
 homoclinic orbit $\gamma$,
 we find chaos %in the sense of Definition \ref{chaos_def}
  in an arbitrary number of symbols.
  We stress that we do not ask for  additional hypotheses as the one used in~\cite{HL2006}.
  We will say an invariant set $\Lambda$ of a continuous map
  $\Pi$ is \emph{chaotic
on $N$-symbols} if
%\begin{enumerate}
%\item[(i)]
$\Pi:\Lambda\to \Lambda$ is semi-conjugate to the shift on the set
$\Sigma_N^+=\{1,\dots,N\}^\mathbb{N}$ of sequences of $N$-symbols
and the periodic orbits of $\Pi$ are dense in~$\Lambda$.
%\end{enumerate}
This notion of a chaotic set is an extension of the so-called
\emph{chaos in the sense of Block and
Coppel}~\cite{BC92,AK01,MPZ09}. As a consequence of the
semi-conjugation we have that $\Pi|_{\Lambda}$ has positive
entropy, namely $h_{top}(\Pi|_{\Lambda})\geq \log N$
%We recall
%that just the semi-conjugation with the shift map does not
%guarantee, in general, the existence of periodic point for
%$\Pi|_{\Lambda}$. However, ensures the existence of a
%$\Pi$-invariant compact subset $\Lambda_0\subset \Lambda$ such
%that $\Pi|_{\Lambda_0}$ is still semi-conjugated to the two-side
%shift on $N$-symbols, transitive
and has sensitive dependence on the initial conditions.
%~\cite[Thm.~3]{AY80}, features of \emph{chaos in the sense of Knudsen}~[K94].

\begin{mainthm} \label{thmA}
For every $N\geq 2$, given any small tubular neighborhood $\mathcal{T}$ of a reversible
non-degenerate bifocal homoclinic orbit $\gamma$ of~\eqref{eqdif}, there is a first return map $\Pi$
describing the dynamics near $\gamma$ on $\mathcal{T}$ which
 has an invariant set $\Lambda_N$ chaotic on $N$-symbols.
\end{mainthm}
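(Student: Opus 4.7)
The plan is to code orbits near $\gamma$ by their pattern of switching among $N$ secondary homoclinic connections generated by the bifocus. I begin with a local analysis in a small neighborhood $V$ of $O$. Using a smooth linearization (or an adequate finite-order normal form) I introduce coordinates $(r_1,\theta_1,r_2,\theta_2)$ attached to the two invariant complex planes, in which the linear flow is a product of two planar spirals. The local transition map from an in-section $\Sigma_{\mathrm{in}}$ transverse to $W^s_{\mathrm{loc}}(O)$ to an out-section $\Sigma_{\mathrm{out}}$ transverse to $W^u_{\mathrm{loc}}(O)$ is then computable; the key feature is the logarithmic winding $\theta_2\sim-(\omega_2/\alpha_2)\log r_2$, which makes the image of any small box spiral infinitely as it approaches $W^u_{\mathrm{loc}}(O)\cap \Sigma_{\mathrm{out}}$. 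Composing with the global return along $\gamma$, which is a local diffeomorphism by the non-degeneracy hypothesis, yields the first return map $\Pi$ on a piece of $\Sigma_{\mathrm{in}}$ inside $\mathcal{T}$.

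Next, I use the spiraling to produce a network of secondary homoclinic orbits. Since both $\omega_1$ and $\omega_2$ are nonzero, the local map applied to a transverse disk through $\gamma\cap\Sigma_{\mathrm{in}}$ produces a surface that accumulates on $W^u_{\mathrm{loc}}(O)\cap\Sigma_{\mathrm{out}}$ along a nested spiral. The global map then brings this surface back to $\Sigma_{\mathrm{in}}$, where it must intersect the trace of $W^s_{\mathrm{loc}}(O)$ transversely infinitely often. Each such intersection corresponds to a secondary homoclinic orbit $\gamma_j$ of $O$. The involution $R$ sends this family of intersections to itself, producing a symmetric network of secondary connections; from it I select $N$ distinct ones $\gamma_1,\dots,\gamma_N$ accumulating on $\gamma$.

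For each $\gamma_j$ I pick a small rectangle $B_j\subset\Sigma_{\mathrm{in}}$ centred at $\gamma_j\cap\Sigma_{\mathrm{in}}$. By tuning the aspect ratio of $B_j$ (thin in the contracting direction, long in the expanding one) one verifies, via the explicit formula for $\Pi$ near each $\gamma_j$, that $\Pi(B_j)$ crosses each $B_k$ in the full Smale sense: there is a horizontal substrip of $B_j$ mapping across $B_k$ for every pair $j,k\in\{1,\dots,N\}$. Standard cone-field and graph-transform arguments then attach to every symbol sequence $(s_n)\in\Sigma_N^+$ a nonempty compact set of points $x$ with $\Pi^n(x)\in B_{s_n}$ for all $n\ge 0$. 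The closure $\Lambda_N$ of the union of these sets is $\Pi$-invariant; the itinerary map is a continuous surjection onto $\Sigma_N^+$ that semi-conjugates $\Pi|_{\Lambda_N}$ to the shift; and periodic itineraries, which are dense in $\Sigma_N^+$, are realised by fixed points of the corresponding iterates of $\Pi$, giving density of periodic orbits in $\Lambda_N$.

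The main obstacle is the resonant saddle index $\delta=|\alpha_1/\alpha_2|=1$ forced by reversibility: the classical Shilnikov hyperbolic estimates that yield horseshoes in the non-resonant bifocal setting degenerate here, and the linearised local map is neither an obvious contraction nor an obvious expansion transverse to the flow. The delicate step is therefore to establish uniform invariant cone fields on the $B_j$ that survive both the resonant local passage near $O$ and the global reinjection along $\gamma$, so that the Markov crossings are robust and the symbolic coding above is well defined. This is precisely where reversibility enters: the involution $R$ pairs stable and unstable contributions and organises the accumulation of secondary connections symmetrically, compensating for the loss of differential hyperbolicity inherent to the resonance $\delta=1$.
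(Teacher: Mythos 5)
Your first two steps (linearization, spiralling image of a transverse disc, transverse intersections with the trace of $W^s_{loc}(O)$ giving a symmetric family of secondary homoclinics) match the paper's setup, which imports them from H\"arterich. The gap is in your third and fourth paragraphs. You reduce the theorem to a Smale-horseshoe construction: Markov crossings of rectangles $B_j$ plus ``standard cone-field and graph-transform arguments.'' But, as you yourself observe, the reversible bifocus is forced to be resonant ($\delta=1$), so the Shilnikov-type expansion/contraction estimates that make those cone fields invariant are exactly what is unavailable. Your proposed fix --- that the involution $R$ ``pairs stable and unstable contributions'' and thereby ``compensates for the loss of differential hyperbolicity'' --- is not an argument; reversibility is a symmetry of the return map ($R\circ\Pi\circ R=\Pi^{-1}$), and it does not produce expansion where there is none. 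The paper's own open questions confirm that this route is not available: the authors state that their chaotic sets do not appear to come from suspended horseshoes, and they leave as open problems both the upgrade to a two-sided conjugacy and the existence of a (partially) hyperbolic structure. A proof that delivered a genuine horseshoe with invariant cones would settle those questions, so the missing step is not a routine verification.

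What the paper does instead is purely topological and leans on reversibility in a different way. It takes discs $D_i\subset\Fix(R)$ centred at the symmetric points $q_i=\gamma_i\cap\Fix(R)$, shows $\Pi(D_i\setminus\{q_i\})$ contains spiralling sheets meeting each $W^s_j$ transversally infinitely often, and pulls back small neighbourhoods of those intersection points to get nested compact sets $D^{nm}_{ij}\subset D^n_i$ with shrinking area. The intersection over all depths yields, for every one-sided itinerary, a point $x\in K\subset\Fix(R)$ whose forward $\Pi$-orbit realizes it; since $x\in\Fix(R)$, the backward orbit follows the same itinerary automatically. This gives surjectivity and continuity of the coding map, hence the semi-conjugacy, without any hyperbolicity. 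Density of periodic orbits is also not extracted from a horseshoe: it comes from H\"arterich's theorem that every (secondary) homoclinic orbit is accumulated by one-parameter families of reversible periodic orbits, so the homoclinic points $H$ lie in $\overline{\mathcal{P}}$ and $K\subset\overline{H}$. You would need to replace your cone-field step by an argument of this topological type (or actually prove the hyperbolic estimates in the resonant case, which is an open problem) before your proof closes.
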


Devaney showed in~\cite{Devaney76} that in any tubular
neighborhood of a non-degenerate Hamiltonian bifocal homoclinic
orbit there are infinitely many secondary homoclinic orbits. That
is, other bifocal homoclinic orbits which make several excursions
along the primary homoclinic orbit. This result was extended for
non-degenerate reversible bifocal homoclinics by
H\"arterich~\cite{Hart} showing that the secondary homoclinics are
also reversible and non-degenerate. Thus, in any neighborhood of
the primary orbit, we can find a \emph{homoclinic network}
$$\Gamma_N=\gamma_1\cup\dots\cup \gamma_N$$ associated
with the bifocus  $O$ composed of $N$ different non-degenerate
reversible homoclinic orbits to $O$.
%for $i=1,\dots,N$.
The chaotic sets $\Lambda_N$ of Theorem~\ref{thmA} will be
obtained by showing the occurrence of \emph{switching} with
respect to the network $\Gamma_N$. In other words, we will prove
that each sequence associated to the homoclinics of the network is
shadowed by a nearby trajectory.

To be precise, consider cross-sections $S_i$ transverse to
$\gamma_i$ and write $\Pi$ for the first return map on the
collection of cross-sections $S=S_1\cup\dots \cup S_N$. Let
$\omega=(\omega_i)_{i\in\mathbb{N}} \in \Sigma_N^+=\{1, \ldots,
N\}^\mathbb{N}$.
%be a sequence in $\Sigma_N^+=\{1, \ldots, N\}^\mathbb{N}$.
According
to~\cite{HK2010} (see also \cite{IbRo}), the homoclinic network
$\Gamma_N$ is called \textit{switching} if for each tubular
neighborhood $\mathcal{U}$ of $\Gamma_N$, there exists a flow
trajectory $\mathcal{\tau}\subset \mathcal{U}$ and a point
$x_\omega\in \mathcal{\tau}$ such that $\Pi^i(x_\omega)\in
S_{\omega_{i+1}}$ for all $i\geq 0$. We call $x_\omega$ the
starting point of the realization $\mathcal{\tau}$ of $\omega$.

Following~\cite{ST97}, a trajectory $\mathcal{\tau}$ is said to be
a \emph{super-homoclinic orbit} to the network $\Gamma_N$ if
$\mathcal{\tau}$ is a bi-asymptotic connection to $\Gamma_N$, that
is it accumulates on the network $\Gamma_N$ in forward and
backward time. A super-homoclinic orbit is not a homoclinic loop
itself, but its presence implies the existence of large number of
the so-called
multi-pulse homoclinic loops. % (see~\cite{ST97}).
We say that the homoclinic network $\Gamma_N$ exhibits
\emph{symmetric super-homoclinic switching} if for each $\omega\in
\Sigma_N^+$ we find a super-homoclinic orbit realizing $\omega$
with starting point $x_\omega\in \mathrm{Fix}(R)$. Since
$x_\omega$ belongs to $\Fix(R)$, then the super-homoclinic orbit
also follows the sequence $\omega$ in backward time. When any
prescribed finite path is realized by a reversible homoclinic
(resp.~periodic) orbit starting in $\mathrm{Fix}(R)$, we say that
$\Gamma_N$ exhibits \emph{symmetric homoclinic}
(resp.~\emph{periodic}) \emph{switching}.

In the next main result, we give a
description of the orbits in a neighborhood of the non-degenerate
reversible homoclinic orbit $\gamma$:

\begin{mainthm} \label{thmB} Under the assumption of Theorem~\ref{thmA},
%for any $N\geq 2$,
every homoclinic network $\Gamma_N$ in the tubular neighborhood
$\mathcal{T}$ of \eqref{eqdif} exhibits symmetric
super-homoclinic, homoclinic and periodic switching.
\end{mainthm}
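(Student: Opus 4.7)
The plan is to realize each sequence $\omega\in\Sigma_N^+$ by a symmetric orbit whose starting point lies in $\Fix(R)$, via a Cantor-type nested construction on the symmetry arcs $\ell_j:=\Fix(R)\cap S_j$ inside each cross-section. First, I work in local $C^1$ linearized coordinates near the bifocus $O$, in which reversibility forces $\alpha_2=-\alpha_1$ and $|\omega_2|=|\omega_1|$ and $R$ can be conjugated to a linear involution exchanging the stable and unstable complex planes. Choose $R$-invariant local cross-sections $\Sigma^{in},\Sigma^{out}$ transverse to $W^s_{loc}(O),W^u_{loc}(O)$; the local transition $\Theta:\Sigma^{in}\to\Sigma^{out}$ is then the classical bifocal logarithmic spiral map which rotates infinitely often as one approaches the invariant manifolds. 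The global transitions $\Psi_k$ along each $\gamma_k$ are $R$-equivariant diffeomorphisms, so the full first return $\Pi$ on $S=\bigcup_j S_j$ is $R$-equivariant in a suitable sense.

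The geometric building block is the following claim: for every pair $(j,k)$, the set $\Pi^{-1}(S_k)\cap S_j$ contains infinitely many open strips accumulating on $W^s(O)\cap S_j$, and each such strip crosses the symmetry arc $\ell_j$ transversally in a sub-arc. This is the Shilnikov-type accordion inherited from $\Theta$, with the extra transversality against $\Fix(R)$ guaranteed by non-degeneracy of $\gamma_k$ (ensuring that $W^s_{loc}(O)\cap\Sigma^{out}$ meets $\Fix(R)\cap\Sigma^{out}$ transversally) together with the fact that $R$ exchanges stable and unstable directions. Using this block, for any finite prefix $(\omega_0,\dots,\omega_n)$ one inductively constructs nested sub-arcs $I_0\supset I_1\supset\cdots\supset I_n$, where $I_0\subset\ell_{\omega_0}$ and each $I_i\subset I_{i-1}$ is a sub-arc whose image $\Pi^i(I_i)$ lies in a chosen strip of $S_{\omega_i}$ leading to $S_{\omega_{i+1}}$.

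Passing to the limit by nested compactness, $\bigcap_{n\geq 0}I_n$ is non-empty and any $x_\omega$ in this intersection lies in $\Fix(R)\cap S_{\omega_0}$ and realizes $\omega$ in forward time. Since $x_\omega\in\Fix(R)$, the full orbit is $R$-symmetric: the backward trajectory is the $R$-image of the forward trajectory and hence also accumulates on $\Gamma_N$; this proves symmetric super-homoclinic switching. For symmetric homoclinic switching, one refines the last step of the nesting so that the final choice lands on $W^s(O)\cap S_{\omega_n}$, which is possible because the strips accumulate precisely on this local stable manifold. For symmetric periodic switching, force the last step to end on $\Fix(R)$ itself: the identity $\phi_{2T}(x)=x$ for $x\in\Fix(R)$ with $\phi_T(x)\in\Fix(R)$ then automatically closes the orbit into a reversible periodic solution of the prescribed combinatorial length.

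The main technical obstacle is the geometric building block of the second paragraph: one must show that the infinite family of Shilnikov strips cuts $\ell_j$ in sub-arcs rather than merely in isolated points, and that the resulting sub-arcs are mapped by $\Pi$ into curves suitably transverse to the next layer of strips inside $S_{\omega_{i+1}}$. This requires combining the explicit spiraling geometry of the bifocal local map with precise bookkeeping of the position of $\Fix(R)$ relative to the invariant manifolds in the cross-sections, which in turn rests on the non-degeneracy of each $\gamma_k$ and the $R$-equivariance of the global transitions $\Psi_k$.
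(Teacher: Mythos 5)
Your overall strategy (a nested construction inside $\Fix(R)$, using reversibility to convert forward realization into a symmetric bi-infinite realization) is the same as the paper's, but the construction as you set it up breaks at the inductive step, and the breakdown is exactly at the point you flag as ``the main technical obstacle.'' The problem is dimensional. You work on the \emph{symmetry arcs} $\ell_j=\Fix(R)\cap S_j$ and build nested \emph{sub-arcs} $I_0\supset I_1\supset\cdots$, i.e.\ a one-dimensional object. Inside a three-dimensional cross-section, the trace $W^s(O)\cap S_j$ is a one-dimensional curve, and the Shilnikov strips accumulate on that curve. After one return, $\Pi(I_1)$ is just a compact arc (a piece of one turn of a spiral); a $1$-dimensional arc has no reason to meet a $1$-dimensional curve in a $3$-manifold, so $\Pi(I_1)$ need not approach $W^s(O)\cap S_{\omega_1}$ at all, and therefore need not cross even one strip leading to $S_{\omega_2}$ (the angular coordinate $\phi_u^{out}=\phi_u^{in}-\tfrac{\omega}{\alpha}\ln r_u^{in}$ only sweeps a bounded range when $r_u^{in}$ is bounded away from $0$ on the arc). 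The same dimensional obstruction kills your refinements for homoclinic switching (you would need the iterated arc to hit the curve $W^s(O)\cap S_{\omega_n}$: $1+1<3$) and for periodic switching (you would need it to hit $\Fix(R)$ again). The paper's fix is to exploit that the flow is transverse to $\Fix(R)$ (since $f(x)$ lies in the $(-1)$-eigenspace of $R$ at fixed points), so the cross-section can be chosen to contain a genuine \emph{two-dimensional disc} $D_i\subset\Fix(R)$. Then $\Pi(D_i\setminus\{q_i\})$ contains a two-dimensional spiralling sheet accumulating on $W^u_j$ (Proposition~\ref{My_Prop5}), and a $2$-dimensional sheet \emph{does} transversally intersect the $1$-dimensional curve $W^s_j$ in the $3$-dimensional section, in a pair of points per turn. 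Those intersection points are what restart the induction: a small disc around each of them is again transverse to $W^s_{loc}(O)$, hence produces a full spiralling sheet at the next stage (Proposition~\ref{lem5}). Without upgrading your arcs to discs, the nested family $\{I_n\}$ cannot be continued past the first step for an arbitrary prescribed symbol.

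Two further points. First, for super-homoclinic switching you assert that realizing the itinerary plus symmetry gives a bi-asymptotic connection to $\Gamma_N$, but visiting the fixed neighborhoods $V_{\omega_k}$ infinitely often does not by itself imply that the distance to the network tends to zero; the paper needs a separate argument (Lemma~\ref{lem:superhomolcinic}) showing the nested neighborhoods $A_k$ shrink and their spiralling images converge to $W^u_1\cup\dots\cup W^u_N$, forcing $\Pi^k(x)\to\{q_1,\dots,q_N\}$. Second, your mechanism for periodic switching (two hits of $\Fix(R)$ close up a symmetric periodic orbit) is a legitimate alternative to the paper's appeal to H\"arterich's accumulating one-parameter families of reversible periodic orbits, but it again requires an intersection of the iterated image with $\Fix(R)$ that is only available in the two-dimensional version of the construction.
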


The paper is organized as follows. First, in Section~\ref{sec2} we
construct  the Poincar\'e return map through the composition of
local and semi-global maps.  We strongly use the notion of
reversibility. After that, in Section~\ref{sec3}, the spiralling
geometry near the bifocus equilibrium is described. In particular,
we state that a disc transverse to the stable manifold of $O$ is
mapped, under the first return map, into a spiralling sheet. The
geometrical setting is explored in Section~\ref{sec_nova} to
obtain regions on the disk that are mapped by the flow into new
spiralling regions containing discs in a position similar to the
first one. This allows us to establish the recurrence needed for
Section~\ref{final}, where Theorem~\ref{thmA} and
Theorem~\ref{thmB} are proven.

% in~\S\ref{final}.
%Some examples of differential equations whose flows exhibit
%reversible homoclinic cycles to a bifocus are included in
%Section~\ref{examples}.} \footnote{Na minha opinão tr\'atase de um
%paragrafo de 10 linhas que não tem nenhuma informação relevante.
%De fato muitas das coisa que se diz são difíceis de entender neste
%ponto (introdução) do artigo. Eu proponho voltar a na versão curta
%de 3 linhas.}

\subsection*{Open questions}
We finish this section with a couple of open questions. Notice that all the periodic and
super-homoclinic orbits found in Theorem~\ref{thmA} and~\ref{thmB}
are symmetric. Thus the first question, less ambitious, but still interesting
is the following.

\textbf{Question 1:} Are there non-symmetric periodic or homoclinic solutions near
$\gamma$?

\noindent Since we are only able to show a semi-conjugacy with the
one-sided shift, a natural problem~is:

\textbf{Question 2:} Is there a semi-conjugacy with the full two-sided shift on $N$-symbols?

%In a bifocal homoclinic network, switching behaviour  is due to
%the combination of the strong spiralling distortion around the
%bifocus with the existence of diverse outcoming directions.

\noindent There are several examples in the literature where
switching is proven through the existence of a suspended horseshoe
in any neighborhood of the network~\cite{HK2010,IbRo}. On the
contrary in~\cite{HK2010, Rodrigues_BSPM}, switching has been
shown to result from an attracting network without a suspended
horseshoe. From the constructions in Theorem~\ref{thmA}, the
chaotic sets we obtain do not seem to come from suspended
horseshoes. But they may arise from the suspension of the
so-called partially hyperbolic sets, such as the product of the
horseshoe by the identity map. Moreover, in \cite{Hart} it is
proven that homoclinic orbits are actually accumulated by
\textit{one-parameter families} of periodic orbits. This parameter
can play the role of the center direction of the partially
hyperbolic set. In~\cite{HL2006}, under extra assumptions,
partially hyperbolic sets were shown to exist in reversible
systems. Thus our last question is:

\textbf{Question 3:} Can the semi-conjugacy obtained in Theorem \ref{thmA} be extended to a conjugacy with a partially hyperbolic set?

\section{Setting and return maps (local and global)} \label{sec2}
\subsection{Setting}
Our object of study is the dynamics around a reversible (not
necessarily conservative) non-degenerate bifocal homoclinic orbit
for which we give a rigorous description here.  Specifically, we
study a differential equation
 \begin{equation}
 \label{general1}
 \dot{x}=f(x),  \quad x\in \RR^4
\end{equation}
where $ f: \RR^{4} \rightarrow \RR^4$ is a  $C^2$-vector field
whose flow has the following properties:
\begin{enumerate}
 \item[\textbf{(P1)}]\label{P1} The vector field $f$ is $R$-reversible.
 That is, $$
 R^2=\mathrm{id}, \quad
\mathrm{dim}(\Fix R)=2 \quad \text{ and } \quad  f\circ R
=-R\circ
 f.$$
 In particular, if $x(t)$ is a solution of~\eqref{general1}, then so is $Rx(t)$.
  \medbreak
 \item[\textbf{(P2)}]\label{P3} The origin $O\in \Fix R$
 is a bifocus equilibrium. In the reversible case, it follows
 that the eigenvalues of $Df(O)$ are  $\pm \alpha \pm i\omega$ where
   $\alpha>0$ and $\omega>0$.
 \medbreak
\item[\textbf{(P3)}]\label{P2} There is a non-degenerate
reversible homoclinic orbit $\gamma$ to $O$. That is,
$$
R(\gamma)=\gamma \quad \text{and} \quad
 \dim \left(T_{\gamma(t)}W^u(O) \cap T_{\gamma(t)}W^s(O)\right)=1
 \quad \text{for all $t\in \mathbb{R}$}.
$$
In particular, the reversibility implies that %$O\in \Fix(R)$ and
$\gamma\cap \Fix(R)\not=\emptyset$.
\end{enumerate}

From now on,  $\varphi(t,x)$ denotes the flow of~\eqref{general1} at time
$t\in \mathbb{R}$ and initial condition $x\in\mathbb{R}^4$. Under assumptions \textbf{(P1)--(P3)}, H\"arterich showed in~\cite{Hart} the
following result:

\begin{thm}[\cite{Hart}] \label{thmHart}
In any tubular neighborhood of $\gamma$ there exist infinitely
many reversible non-degenerate homoclinic orbits to $O$. Moreover,
each homoclinic orbit is accumulated by a one-parameter family of
reversible periodic orbits.
% which follow the homoclinic around a turn~before~closing. %\textcolor{blue}{Improve language....}
\end{thm}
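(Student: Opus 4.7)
The plan is to exploit the interplay between $R$-symmetry and the spiralling geometry near the bifocus $O$. Reversibility reduces the problem: an orbit is an $R$-symmetric homoclinic to $O$ iff it meets $W^u(O)\cap\Fix(R)$, since for $x\in W^u(O)\cap\Fix(R)$ we have $\varphi(t,x)=R\varphi(-t,x)\to RO=O$ as $t\to+\infty$. Moreover, non-degeneracy \textbf{(P3)} forces $W^u(O)$ and $\Fix(R)$ to intersect transversally at $p:=\gamma\cap\Fix(R)$: writing $T_p\mathbb{R}^4=E^+\oplus E^-$ for the $\pm 1$-eigenspaces of $DR(p)$, any $v\in T_pW^u\cap E^+$ would force $v\in T_pW^u\cap T_pW^s=\mathbb{R}f(p)$ and $Rv=v$, incompatible with $Rf(p)=-f(p)$ unless $v=0$. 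As $W^u(O)$ and $\Fix(R)$ both have codimension $2$ in $\mathbb{R}^4$, such a transverse intersection is a discrete point; hence it suffices to exhibit infinitely many $p_n\in W^u(O)\cap\Fix(R)$ accumulating on $p$.

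Choose a cross-section $\Sigma$ transverse to $\gamma$ near where $\gamma$ re-enters a small neighborhood $U$ of $O$. A Shil'nikov-type estimate near the bifocus (eigenvalues $\pm\alpha\pm i\omega$) shows that every transversal to $W^s_{\mathrm{loc}}(O)\cap\Sigma$ is mapped under the local flow to a spiral of winding rate $\omega/\alpha$ accumulating on $W^u_{\mathrm{loc}}(O)\cap\Sigma$. Globally, after each excursion around $\gamma$, this spiralling produces a sequence of ``higher'' two-dimensional sheets $W^u_n(O)$ of $W^u(O)$ inside $U$ that accumulate on $W^u_{\mathrm{loc}}(O)$, each passing through a small neighborhood of $p$ by continuity of the geometry. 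Transversality at $p$ persists by continuity along the spiral, so each sheet $W^u_n(O)$ meets $\Fix(R)$ transversally in $\mathbb{R}^4$ at a point $p_n$ with $p_n\to p$. Each $p_n$ lies on a distinct reversible homoclinic $\gamma_n$ whose non-degeneracy $\dim(TW^u\cap TW^s)=1$ along $\gamma_n$ is a rephrasing (via $W^s=RW^u$) of this same transversality.

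For the accompanying one-parameter family of reversible periodic orbits near $\gamma_n$: an orbit meeting $\Fix(R)$ in two distinct points at times $0$ and $t_1$ is automatically $R$-symmetric and periodic of period $2t_1$, by a one-line reversibility calculation using $R\varphi(t,\cdot)=\varphi(-t,R\cdot)$. The condition $\varphi(T,q)\in\Fix(R)$ for $q\in\Fix(R)$ near $p_n$ and large $T$ then cuts out, via the implicit function theorem fed by the same transversality used above, a smooth $1$-dimensional family of initial conditions passing through $p_n$ with periods $T\to+\infty$ as $q\to p_n$. The main obstacle I anticipate is controlling the spiralling estimate with only $C^2$ regularity of $f$: a smooth linearization at a resonant bifocus is generically obstructed, so the spiralling transversality has to be extracted from a Shil'nikov-type normal form together with careful uniform derivative bounds that survive composition with the global map along $\gamma$ and propagate the non-degeneracy and the implicit-function step to every secondary $\gamma_n$.
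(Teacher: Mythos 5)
The paper does not actually prove this statement: Theorem~\ref{thmHart} is imported wholesale from H\"arterich~\cite{Hart} (with the accompanying periodic families going back to Devaney's blue-sky catastrophe~\cite{Devaney77}), so there is no internal proof to compare against. Your sketch reconstructs the strategy of the cited source: reduce symmetric homoclinics to points of $W^u(O)\cap\Fix(R)$, check that the primary intersection at $p=\gamma\cap\Fix(R)$ is transverse, and use the spiralling of the returning branch of $W^u(O)$ around $W^u_{loc}(O)$ to manufacture infinitely many further transverse intersections $p_n\to p$; symmetric periodic orbits come from orbits hitting $\Fix(R)$ twice. The reduction, the transversality computation at $p$, and the ``two hits of $\Fix(R)$ implies symmetric periodic'' lemma are all correct, and your closing worry about $C^2$ regularity is the right one -- it is resolved in~\cite{Hart} via Belitskii's $C^1$-linearization theorem, as the paper notes in Section~\ref{sec2}.

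Two steps are stated too glibly, and they are precisely where the work in~\cite{Hart} lies. First, the claim that non-degeneracy of $\gamma_n$ ``is a rephrasing'' of the transversality of $W^u(O)$ to $\Fix(R)$ at $p_n$ is false in the direction you need: transversality to $\Fix(R)$ does not imply $\dim(T_{p_n}W^u\cap T_{p_n}W^s)=1$. For instance, $T_{p_n}W^u$ equal to the $(-1)$-eigenspace $E^-$ of $R$ is transverse to $\Fix(R)$ yet satisfies $T_{p_n}W^s=R\,T_{p_n}W^u=T_{p_n}W^u$, i.e.\ degeneracy; only the converse implication holds. The repair lives inside your own framework: in the cross-section the returning branch of $W^u(O)$ is a curve whose tangent directions converge to those of the primary branch through $p$ (in~\eqref{local_flow_eq} the angular derivative dominates the radial one as $r_u^{in}\to 0$), so $T_{p_n}W^u\to T_pW^u$, hence $T_{p_n}W^s=R\,T_{p_n}W^u\to T_pW^s$, and non-degeneracy of $\gamma_n$ follows from that of $\gamma$ because $\dim(V_1\cap V_2)\le 1$ is an open condition on pairs of $2$-planes; this needs to be said, not asserted as an equivalence. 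Second, for the periodic families the relevant object is the two-dimensional surface $\Pi(\Fix(R)\cap V)$, whose tangent planes rotate with the spiral rather than converging, so its transversality to $\Fix(R)$ is \emph{not} ``the same transversality used above'' and must be verified at the actual intersection loci (or one simply invokes~\cite{Devaney77}, which applies to each symmetric homoclinic $\gamma_n$ once part one is established). Neither issue breaks the architecture, but as written both steps would fail under scrutiny.
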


We fix a tubular neighborhood $\mathcal{T}$ of $\gamma$ and $N\geq
2$. According to Theorem~\ref{thmHart} we can find $N$ different
reversible non-degenerate homoclinic orbits $\gamma_i$ in
$\mathcal{T}$. Consider the network
$$
  \Gamma_N = \gamma_1\cup \dots \cup \gamma_N.
$$
We perform a similar analysis to that of \cite{Hart,IbRo} and study the
first return map over a
cross-section transverse to the homoclinic network $\Gamma_N$. The
flow in a neighborhood of $\gamma$ consists of local and global dynamics.
When a trajectory is near the equilibrium its behavior is
essentially governed by the linearized vector field. Far from the
equilibrium, we use the Tubular Flow Theorem \cite{PM} to create a global return map.

\subsection{Local dynamics}
According to \cite{Hart}, the equation~\eqref{general1} can be linearized near
$O$. That is, there exists a neighborhood $V_O$ of $O$ such that, if $(x_1, x_2, x_3, x_4)$ is in $V_O$, the system
(\ref{general1}) is $C^1$-orbitally equivalent to the linear
system
\begin{equation} \label{linearization1}
\left\{
\begin{aligned}
 \dot{x}_1 &= -\alpha x_1 - \omega x_2,  \\
 \dot{x}_2 &= \omega x_1 - \alpha x_2, \\
 \dot{x}_3 &= \alpha x_3 + \omega x_4,  \\
 \dot{x}_4 &= -\omega x_3 +\alpha x_4.
\end{aligned}
\right.
\end{equation}
%\begin{equation}
%\label{linearization1} \dot{X}= \begin{pmatrix} -\alpha & -\omega
%&0 &0\\ \omega & -\alpha & 0 &0 \\ 0 &0&\alpha & \omega \\ 0 &0
%&-\omega &\alpha   \end{pmatrix}X. \qquad
%\end{equation}
%\begin{figure}
%\begin{center}
%%\includegraphics[height=5.2cm]{cross_sections1}
%\end{center}
%\caption{\small Coordinates and cross sections near $O$: $\Sigma^{in}_O$  (a) and $\Sigma^{out}_O$(b). }
%\label{cross_sections}
%\end{figure}
%Hereafter, we are assuming that $V_O$ is the neighborhood of $O$ in which the flow can be linearized.
% near $O$ is a``hypercylinder''.
%Rescaling the coordinates, the height of the cylinder is 2.
\begin{rem} The idea behind \cite{Hart} to linearize the vector field near the origin is  Belitskii's theorem \cite{Belitskii73}, which guarantees the $C^1-$ linearisation, being enough for our purposes. The refinement of this study (as the stability of elliptic fixed points and more involving bifurcations) requires the use of the normal form described in \cite{SSTC}.
\end{rem}

Using \eqref{linearization1}, we may define new bipolar coordinates $(r_s, \phi_s,r_u, \phi_u)$ near the
bifocus $O$:
$$
x_1=r_s \cos(\phi_s), \qquad x_2=r_s \sin(\phi_s) \qquad x_3=r_u
\cos(\phi_u)\qquad \text{and} \qquad x_4=r_u \sin(\phi_u).
$$
The local invariant manifolds are given by
$$
W^s_{loc}(O)=\{ r_u=0\} \qquad \text{and} \qquad   W^u_{loc}(O)=\{
r_s=0\}.
$$
Near $O$, in bipolar coordinates $(r_s, \phi_s,r_u, \phi_u)$, the
dynamics is governed by the differential equations
\begin{equation}
\label{equation1} \dot{r}_s=-\alpha r_s, \qquad
\dot{\phi}_s=\omega, \qquad \dot{r}_u=\alpha r_u \qquad \text{and}
\qquad  \dot{\phi}_u=-\omega
\end{equation}
whose explicit solutions can be written as
$$
r_s(t)=r_s(0) e^{-\alpha t}\qquad\phi_s(t)=\phi_s(0)+ \omega t
\qquad r_u(t)=r_u(0) e^{\alpha t} \qquad   \phi_u(t)=\phi_u(0)-
\omega t.
$$

\subsubsection{Cross sections near to the bifocus equilibrium}
Without restriction we can assume that the linear involution $R$
in \eqref{P1} is given by $R(x_1,x_2,x_3,x_4)=(x_3,x_4,x_1,x_2)$, as in \cite{Hart}. %
%\begin{equation*}
%  R=\begin{pmatrix}
%   0 & 0 & 1 &0\\
%   0 & 0 & 0 &1 \\
%   1 & 0 & 0 &0  \\
%   0 & 1 & 0 &0 \end{pmatrix}.
%\end{equation*}
Thus the two-dimensional set of fixed points by $R$ is written
in bipolar coordinates as $$\Fix(R) =\{r_s=r_u, \phi_s=\phi_u\}.$$

In order to construct the Poincar\'e map, we consider the
three-dimensional return sections near the origin, $\Sigma^{in}_O$
and $\Sigma^{out}_O$ in $V_O$, which are solid tori defined by
$$
\Sigma^{in}_O=\{r_s=r\} \quad \text{and} \quad \Sigma^{out}_O=
\{r_u=r\}=R(\Sigma^{in}_O)
$$
where $r>0$ is chosen sufficiently small such that
$$
\gamma_i\cap \Sigma_O^{in} =\{q^s_i\} \subset W^s_{loc}(O) \quad
\text{for all $i=1,\dots, N$}.
$$
By reversibility we also have that $\gamma_i\cap \Sigma_O^{out}
=\{q^u_i\} \subset W^u_{loc}(O)$ for all $i=1,\dots, N$.
%\medbreak The construction may be performed in such a way that
%$T(\Sigma^{in}_O)=\Sigma^{out}_O$.
Trajectories starting   at $\Sigma^{in}_O$ and
$\Sigma^{out}_O$ go outside of $V_O$ in negative and positive
time, respectively.  For convenience we write $(\phi_s^{in}, r_u^{in},
\phi_u^{in})$ and $(r_s^{out}, \phi_s^{out}, \phi_u^{out})$ for
the coordinates in $\Sigma_O^{in}$ and $\Sigma_O^{out}$
respectively. When there is no risk of ambiguity, we write $W^{u/s}$ instead of $W^{u/s}_{loc}(O)$.
%positive time. Trajectories starting at $\Sigma^{out}_O$ go
%
%Trajectories starting at $\Sigma^{in}_O$ go inside of $V_O$ in
%positive time. Trajectories starting at $\Sigma^{out}_O$ go
%outside $V_O$ in positive time.
%Moreover, the set $\Sigma^0=\{r_s=r_u \leq 1\}$ is
%$T$--invariant and contains $O$. It is worth to observe that
%$T$--reversible periodic solutions in a tubular neighbourhood of
%$\Gamma$ intersect $Fix(T)$ either in $V_O\cap Fix(T)$ or in
%$\Sigma\cap Fix(T)$ or in both.

\subsubsection{Local flow near to the bifocus}
The time of flight inside $V_O$
of a trajectory with initial condition $(\phi_s^{in}, r_u^{in},
\phi_u^{in}) \in \Sigma^{in}_O\backslash W^s_{loc}(O)$ is given by
$$
\frac{1}{\alpha}\ln\left(\frac{1}{r_u^{in}}\right)=
-\frac{\ln(r_u^{in})}{\alpha}.
$$ Integrating (\ref{equation1}), see computations in \cite{Hart,
IbRo}, we may define the map $ \Pi_{O}: \Sigma^{in}_O \backslash W^s(O)
\rightarrow \Sigma^{out}_O $ defined by
\begin{equation}
\label{local_flow_eq} (\phi_s^{in}, r_u^{in}, \phi_u^{in}) \mapsto
(r_s^{out},\, \phi_s^{out},\, \phi_u^{out})=\left(r_u^{in},\,
\phi_s^{in}+\frac{\omega}{\alpha} \ln (r_u^{in}),\,
\phi_u^{in}-\frac{\omega}{\alpha} \ln (r_u^{in})\right).
\end{equation}

\subsection{Transitions and global maps} \label{transitions}

\begin{figure}
 ~\vspace{-0.5cm}
\begin{center}
\ifpdf\includegraphics[scale=0.85]{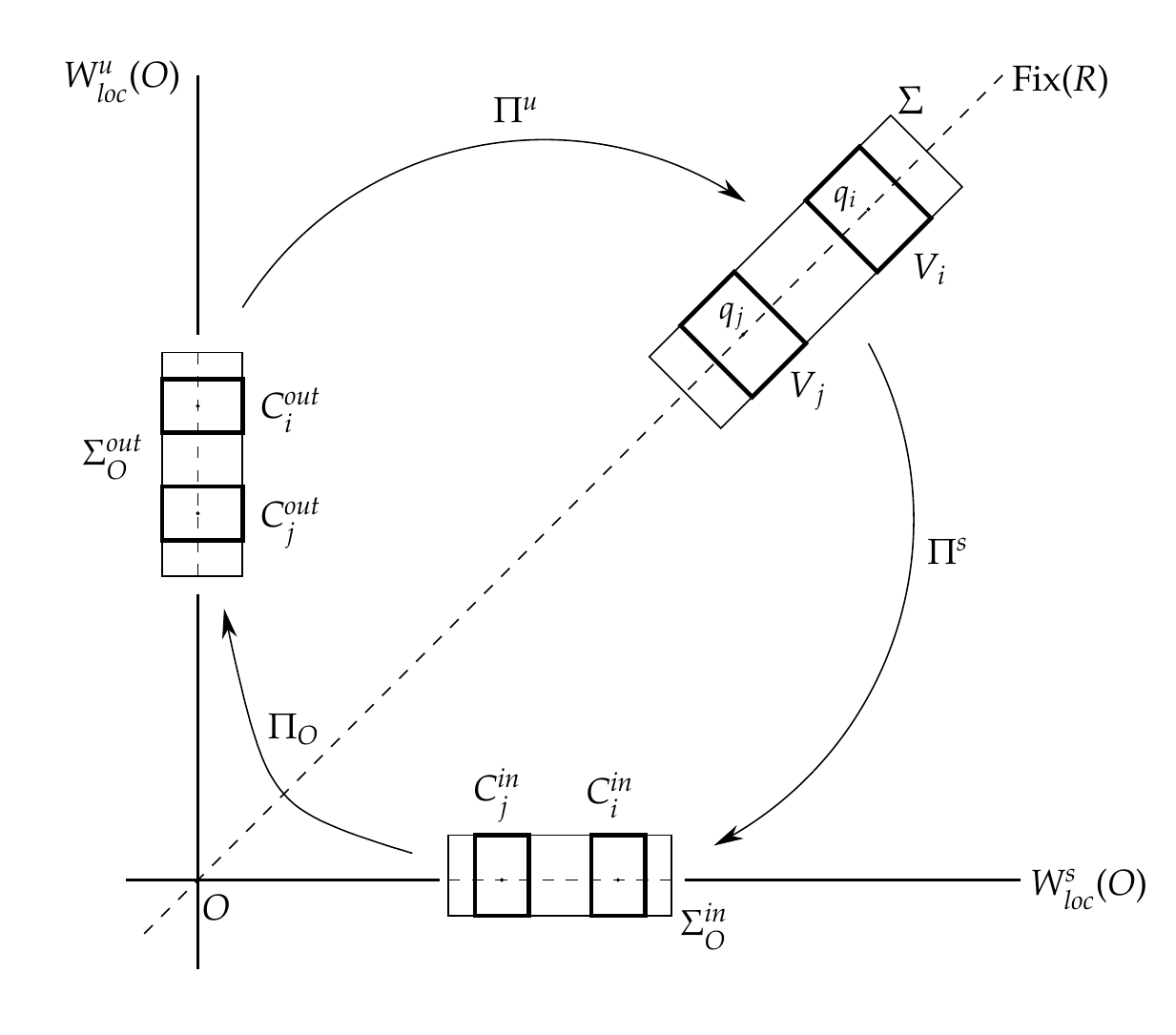} \else
\vspace{0.5cm}\psscalebox{0.85}{\input{fig/retorno}} \fi
\end{center}
\caption{Scheme of the first return map $\Pi= \Pi^u\circ \Pi_O
\circ \Pi^s$ on the section $\Sigma$. } \label{fig1}
\end{figure}

Following Harterich \cite{Hart}, we can make use of the
reversibility in the construction of the global Poincar\'e map by
taking a $R$-invariant cross-section $\Sigma$ containing the
points
$$ \gamma_i \cap \Fix(R)=\{q_i\} \quad \text{for i=1,\dots,N.}$$

A global map $\Pi^u$ between $\Sigma^{out}_O$ and $\Sigma$ is
induced by the flow along $\gamma_i$. To be more specific, given any neighborhood
$V_i$ of $q_i$ in $\Sigma$, there exists a neighborhood $C_i^{out}
\subset \Sigma^{out}_O$ of $q^u_i$ and a $C^2$-map $\tau_i:
C_i^{out} \to \mathbb{R}$ such that
$$
\varphi(\tau_i(q^u_i),q^u_i)=q_i \quad \text{and} \quad
\Pi^u_i(x):=\varphi\left(\tau_i(x),x\right)\in V_i \quad \text{for all $x\in
C^{out}_i$ and $i=1,\dots, N$.}
$$

By taking the sets $V_i\subset \Sigma$  small enough
we can obtain that $C_i^{out}$
 are pairwise disjoint compact neighborhoods of
$q^u_i$ in $\Sigma^{out}_O$ for all $i=1,\dots,N$. Moreover, we can also
take $V_i$ such that $R(V_i)=V_i$. Hence,
$\Pi^u$ is defined as
$$\Pi^u|_{C^{out}_i}=\Pi^u_i \quad \text{for all $i=1,\dots,N$}.$$
By reversibility, $\Pi^s$ is a semi-global map between $\Sigma$
and $\Sigma^{in}_O$ by means of
$$\Pi^s=R\circ (\Pi^u)^{-1}\circ
R.
$$

Finally, we introduce the Poincar\'e first return map on $\Sigma$
following the
homoclinic network $\Gamma_N$ %=\gamma_1\cup\dots\cup \gamma_N$
as $\Pi= \Pi^u\circ \Pi_O \circ \Pi^s$ (see Figure~\ref{fig1}).
Observe that $\Pi$ is actually defined as the composition of three
maps: first $\Pi^s$ which is well defined from $V=R(V) \subset
\Sigma$ to $\Sigma^{in}_O$, then $$\Pi_O: \Sigma_O^{in}\setminus
W^s_{loc}(O) \to \Sigma^{out}_O$$ and finally $\Pi^u : C \to
\Sigma$ where $V=V_1\cup \dots \cup V_N \subset \Sigma$ and
$C=C_1^{out}\cup \dots \cup C_N^{out} \subset \Sigma^{out}_O$.
Note that $\Pi$ is of class $C^r$ with $r\geq 2$ and it is a
reversible map, i.e., $R\circ \Pi \circ R = \Pi^{-1}$.

\section{Spiralling geometry} \label{sec3}
In this section we describe the strong spiralling behavior of
solutions near $O$. We suggest the reader follows
Figure~\ref{geometry}.

Let $a \in \RR$, $D$ be a disc centered at $p\in\RR^2$. A
\emph{spiral} on $D$ around the point $p$ is a smooth curve $S
:[0, \infty) \rightarrow D,$ satisfying $ \lim_{s\to \infty } S
(s)=p$ and such that if $S (s)=(r(s),\phi(s))$ is its expression
in polar coordinates around $p$ then
\begin{enumerate}
\item $r(s)$ is bounded by two monotonically decreasing maps converging to zero as $s \rightarrow
\infty$,
\item $\phi(s)$ is monotonic for some unbounded subinterval of $[0,\infty)$
and
\item $\lim_{s\to +\infty}|\phi(s)|=\infty$.
\end{enumerate}
The notion of spiral may be naturally extended to any set
diffeomorphic to a disc.
%An helix $H\subset\Sigma_O^{in}$ accumulating on $W_{loc}^s(O)$ is
%a curve (parametrized by $t$) $ H: [0,1] \rightarrow \Sigma_O^{in}
%$ without self-intersections such that if $H(t)= (\phi_s^{in}(t),
%r_u^{in}(t), \phi_u^{in}(t))$, then:
%\begin{itemize}
%\item the components are quasi-monotonic functions of $t$;
%\item $\lim_{t \rightarrow 0^+} |\phi_s^{in}(t)| = \lim_{t \rightarrow 0^+} |\phi_u^{in}(t)| = +\infty$ and $\lim_{t \rightarrow 0^+} r_u^{in}(t)=0$.
%\end{itemize}
%Similarly, we define helix in $\Sigma_O^{out}$ accumulating on
%$W^u_{loc}(O)$.

\begin{figure}[ht]
\begin{center}
\includegraphics[scale=0.9]{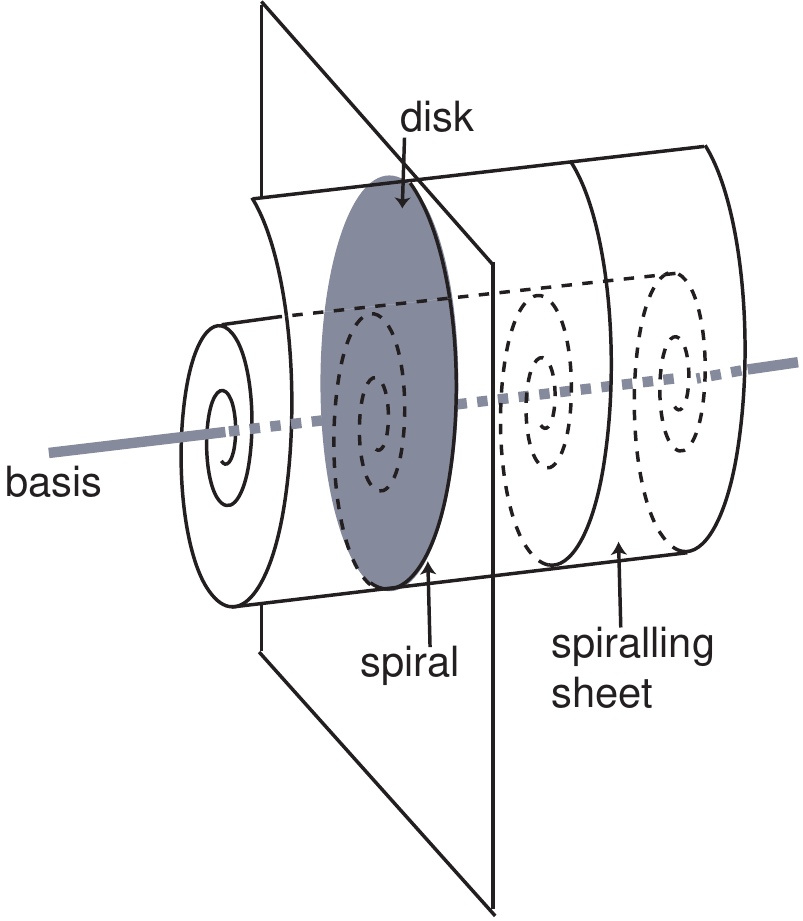}
\label{geometry}
\end{center}
\caption{Geometric structures. } \label{geometry}
\end{figure}

A two-dimensional manifold  $\mathcal{S}$ embedded in
$\mathbb{R}^3$ is called a \emph{spiralling sheet} accumulating on
a curve $\mathcal{C}$ if there exist a spiral $S$ around $(0,0)$,
a neighborhood $V\subset \mathbb{R}^3$ of $\mathcal{C}$, a
neighborhood $W_0 \subset \mathbb{R}^2$ of the origin, a
non-degenerate closed interval $I$ and a diffeomorphism $\eta:
V\to I \times W_0$ such that
$$
\eta(\mathcal{S}\cap V)=I \times (S \cap W_0) \quad \text{and}
\quad \mathcal{C}=\eta^{-1}(I\times \{0\}).
$$
The curve $\mathcal{C}$ may be called the \emph{basis} of the
spiralling sheet. Up to a diffeomorphism, we may think on a
spiralling sheet accumulating on a curve as the cartesian product
of a spiral and a curve.  In the present paper, the curve
$\mathcal{C}$ lies on the invariant manifolds of $O$. Each
cross section to $\mathcal{C}$ intersects the
spiralling sheet $\mathcal{S}$ into a spiral. Note also that the
diffeomorphic image of a spiralling sheet contains a spiralling
sheet.

%\label{scroll_def} Given two spiralling sheets $\mathcal{H}_1$ and
%$\mathcal{H}_2$ accumulating on the same curve
%$\mathcal{C}\subset\mathbb{R}^3$, any region limited by
%$\mathcal{H}_1$ and $\mathcal{H}_2$ inside a tubular neighbourhood
%of $\mathcal{C}$ is said a \emph{scroll} accumulating on
%$\mathcal{C}$.
%
%\bigbreak
%
%\subsection{Local spiralling geometry}
%\label{firstreturn}
The following result will be essential  in the
sequel. It shows that a set diffeomorphic to a disc transverse to
$W^s_{loc}(O)\cap \Sigma^{in}_O$ is sent by
$\Pi_O$~into~a~spiralling~sheet.
%It gives a general characterization of the geometry near the bifocus.
\begin{prop}[\cite{Hart, IbRo}]
\label{My_Prop5} For $\nu>0$ arbitrarily small, let $\Xi: D\subset
\mathbb{R}^2 \rightarrow \RR$ be a $C^1$ map defined on the disc
$D=\{(u,v)\in\mathbb{R}^2 : 0\leq u^2+v^2 \leq \nu <1\}$ and let
$$
\mathcal{F}^{in}=\{(\phi_s^{in}, r_u^{in}, \phi_u^{in}) \in
\Sigma^{in}_O : \  \phi_s^{in}=\Xi(r_u^{in} \cos
\phi_u^{in},r_u^{in} \sin \phi_u^{in}),\ 0\leq r_u^{in}\leq \nu,\
0\leq \phi_u^{in} <2\pi\}
$$
%and
%$$
%\mathcal{F}^{out}=\{(r_s^{out},\phi_s^{out}, \phi_u^{out}) \in
%\Sigma^{out}_O : \phi_u^{out}=\Xi(r_s^{out} \cos
%\phi_u^{out},r_s^{out} \sin \phi_u^{out}),\,0\leq r_s^{out}\leq
%\upsilon,\,0\leq \phi_s^{out} <2\pi\}
%$$
Then %the following assertions are valid: \medbreak
%\begin{enumerate}
%\item Any segment in $\mathcal{F}^{in}\backslash W^s_{loc}(O)$ is mapped by $\Pi_O$ into a helix accumulating on $W^u_{loc}(O)$.
%\medbreak
%\item
the set $\Pi_{O}(\mathcal{F}^{in}\backslash W^s_{loc}(O))$ is a
spiralling sheet accumulating on $W^u_{loc}(O) \cap
\Sigma^{out}_O$.
%
%\medbreak
%\item The set $\Pi_{O}^{-1}(\mathcal{F}^{out}\backslash W^u_{loc}(O))$ is a spiralling sheet accumulating on $W^s_{loc}(O) \cap \Sigma^{in}_O$. \medbreak
%
%\medbreak
%\end{enumerate}
 \end{prop}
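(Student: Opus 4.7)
I invoke the explicit formula~\eqref{local_flow_eq} for $\Pi_O$ and parametrize $\mathcal{F}^{in}\setminus W^s_{loc}(O)$ by $(u,v):=(r_u^{in},\phi_u^{in})$ with $u\in(0,\sqrt{\nu}]$ and $v\in[0,2\pi)$, using that $\phi_s^{in}$ is determined by $\Xi$ on $\mathcal{F}^{in}$. In the $(r_s^{out},\phi_s^{out},\phi_u^{out})$-coordinates on $\Sigma^{out}_O$ the image then reads
$$
r_s^{out}=u,\qquad \phi_s^{out}=\Xi(u\cos v,u\sin v)+\frac{\omega}{\alpha}\ln u,\qquad \phi_u^{out}=v-\frac{\omega}{\alpha}\ln u.
$$
I take the basis curve $\mathcal{C}$ to be a short arc of $W^u_{loc}(O)\cap \Sigma^{out}_O=\{r_s^{out}=0\}$ parametrized by $\phi_u^{out}$; the whole circle is then covered by finitely many such arcs.

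Second, I change the parameter $(u,v)$ to $(u,z)$ with $z:=v-\frac{\omega}{\alpha}\ln u=\phi_u^{out}$, so that each slice $\{\phi_u^{out}=z\}$ is swept by $u$ alone. Writing the slice in polar coordinates $(r,\phi)$ around the basis point, one has $r(u)=u$ and $\phi(u)=\theta_z(u):=\Xi(u\cos\beta,u\sin\beta)+\frac{\omega}{\alpha}\ln u$ with $\beta:=z+\frac{\omega}{\alpha}\ln u$. Reparametrizing by $s=-\ln u$, the radius $e^{-s}$ decreases monotonically to $0$, while $\partial_u\theta_z=\frac{\omega}{\alpha u}+O(1)>0$ for small $u$ yields monotonicity of $\phi(s)=-\frac{\omega}{\alpha}s+O(1)$ on an unbounded subinterval with $|\phi(s)|\to\infty$. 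This verifies the three defining conditions of a spiral on every slice.

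Third, to upgrade these slice-wise spirals into a bona fide spiralling sheet I would construct $\eta$ explicitly by rectifying to a reference slice. Fixing $z_0$ in the parameter range and passing to Cartesian coordinates $(y_1,y_2,z):=(r_s^{out}\cos\phi_s^{out},r_s^{out}\sin\phi_s^{out},\phi_u^{out})$ on a tubular neighborhood $V$ of the arc, I define $\eta$ as the $z$-dependent rotation of the $(y_1,y_2)$-plane by the angle $\alpha_z(r):=\theta_{z_0}(r)-\theta_z(r)$, extended by continuity at $r=0$. By construction $\eta$ maps every slice of the image onto the single reference spiral $S:=S_{z_0}$ and fixes $\mathcal{C}$ pointwise, so $\eta(\mathcal{S}\cap V)=I\times(S\cap W_0)$ and $\eta^{-1}(I\times\{0\})=\mathcal{C}$.

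The main difficulty is checking that $\eta$ is a genuine $C^1$-diffeomorphism up to the basis curve, since the defining angles of the spirals spin infinitely as $r\to 0^+$. The key observation is that the divergent term $\frac{\omega}{\alpha}\ln u$ appears identically in $\theta_z$ and $\theta_{z_0}$ and therefore cancels in $\alpha_z$; the $C^1$-regularity of $\Xi$ combined with the mean value theorem then gives $|\alpha_z(r)|\leq Cr|z-z_0|$, $|\partial_z\alpha_z|\leq Cr$ and $|\partial_r\alpha_z|\leq C$, with the last bound resting on the fact that the $r$-derivatives of $r\cos\beta$ and $r\sin\beta$ stay bounded despite $d\beta/dr=\frac{\omega}{\alpha r}$. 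These estimates are exactly what is needed to ensure that $\eta$ and its inverse are $C^1$ on $V$.
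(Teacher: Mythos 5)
The paper does not actually prove Proposition~\ref{My_Prop5}: it is quoted from \cite{Hart, IbRo} without argument, so there is no internal proof to compare against. Judged on its own, your verification is correct and essentially the standard one: you read off the image of $\mathcal{F}^{in}$ from the explicit formula \eqref{local_flow_eq}, check the three spiral axioms on each slice $\{\phi_u^{out}=\mathrm{const}\}$ (where the dominant term $\tfrac{\omega}{\alpha}\ln u$ forces $r=e^{-s}\to 0$, eventual monotonicity of $\phi$, and $|\phi|\to\infty$), and then build the rectifying diffeomorphism $\eta$ as a fibered rotation. The one genuinely delicate point --- that $\eta$ stays $C^1$ up to the basis curve even though each individual spiral winds infinitely --- is handled correctly: the divergent $\tfrac{\omega}{\alpha}\ln u$ cancels in the difference $\theta_{z_0}-\theta_z$, and your bounds $|\alpha_z(r)|\le Cr|z-z_0|$, $|\partial_z\alpha_z|\le Cr$, $|\partial_r\alpha_z|\le C$ (the last because $\tfrac{d}{dr}(r\cos\beta)=\cos\beta-\tfrac{\omega}{\alpha}\sin\beta$ is bounded) do give a $C^1$ diffeomorphism that is the identity on the basis. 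Two cosmetic remarks: the curve $W^u_{loc}(O)\cap\Sigma^{out}_O$ is a circle while the definition of spiralling sheet uses an interval $I$, so your reduction to finitely many arcs is the right reading and matches how the proposition is used later (accumulation on the arcs $W^u_j$); and the radius bound on $r_u^{in}$ ($\nu$ versus $\sqrt{\nu}$) is an inconsistency already present in the statement, not an error you introduced.
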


\section{The recurrence}
\label{sec_nova} In Section  \ref{sec2},  we have fixed pairwise disjoint
compact neighborhood $V_i$ of $q_i$ in $\Sigma$ for $i=1,\dots,N$.
By means of the global transition maps we have also got pairwise
disjoint compact neighborhoods $C^{out}_i=(\Pi^u)^{-1}(V_i)$ of
$q^u_i$ in $\Sigma^{out}_O$ and $C^{in}_i=\Pi^s\circ
R(V_i)=R(C^{out}_i)$ of $q^s_i$ in $\Sigma^{in}_O$ for
$i=1,\dots,N$. We take discs
$$
 D_i \subset V_i\cap \Fix(R) \subset  \Sigma \ \text{centered at $q_i$ for all
 $i=1,\dots,N$.}
$$
Now, we introduce the local stable un unstable manifolds of $O$ in
$\Sigma$ as
$$
W^u_i = \Pi^u\left(W^u_{loc}(O)\cap C^{out}_i\right) \quad \text{and} \quad
W^s_i = {(\Pi^s)}^{-1}\left(W^s_{loc}(O)\cap C^{in}_i\right) \quad \text{for
$i=1,\dots,N$}.
$$
Observe that the orbit starting in $W^s_i$ (resp.~$W^u_i$) goes directly to $O$, in forward (resp.~backward)
time. The proof of the main results needs the following basic result.

\begin{lem} \label{lem4}
If there is an integer $n\geq 0$  such that $x\in W^s_i\cap
\Pi^n(\Fix(R))$ or $x\in W^u_i\cap \Pi^{-n}(\Fix(R))$, then the
associated solution is a reversible homoclinic orbit to $O$.
\end{lem}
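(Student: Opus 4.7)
I would deduce the lemma directly from the reversibility of the flow. The identity $f\circ R=-R\circ f$ forces $\varphi(-t,Rp)=R\varphi(t,p)$; in particular, if $p\in\Fix(R)$ then the backward half-orbit of $p$ is precisely the $R$-image of its forward half-orbit. Since $O\in\Fix(R)$, one half converges to $O$ if and only if the other does. This single symmetry is the entire engine of the argument.

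\textbf{First case.} Given $x\in W^s_i\cap\Pi^n(\Fix(R))$, choose $y\in\Fix(R)$ with $\Pi^n(y)=x$. Because $\Pi$ is the Poincar\'e return map on the $R$-invariant cross-section $\Sigma$, the points $y$ and $x$ lie on the same continuous trajectory, with $x$ occurring after $y$. The definition of $W^s_i$ gives $\Pi^s(x)\in W^s_{\mathrm{loc}}(O)$, so the forward continuous orbit of $x$, and hence of $y$, converges to $O$. Applying the reversibility identity at $y\in\Fix(R)$, the backward orbit of $y$ is the $R$-image of its forward orbit and therefore also converges to $O$. Consequently the entire trajectory through $y$ (and through $x$) is bi-asymptotic to $O$ and crosses $\Fix(R)$, i.e.\ it is a reversible homoclinic orbit to $O$.

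\textbf{Second case.} For $x\in W^u_i\cap\Pi^{-n}(\Fix(R))$, set $y=\Pi^n(x)\in\Fix(R)$: now $y$ is a later return of $x$ on the same continuous orbit, the definition of $W^u_i$ makes the backward continuous orbit of $y$ (which passes through $x$) converge to $O$, and reversibility at $y$ transports this convergence to the forward side. Alternatively, this case follows from the first by invoking the relation $R\circ\Pi\circ R=\Pi^{-1}$ established at the end of Section~\ref{sec2}, together with $R(\Fix(R))=\Fix(R)$ and $R(W^u_i)\subset\bigcup_j W^s_j$.

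\textbf{Anticipated obstacle.} I do not foresee a serious difficulty: the whole proof amounts to a single invocation of the $R$-symmetry of the flow combined with the defining properties of $W^s_i$ and $W^u_i$. The only bookkeeping point needing care is verifying that a discrete intersection with $\Fix(R)$ detected via iterates of $\Pi$ really corresponds to an honest continuous-time crossing of $\Fix(R)$ in phase space; this is automatic because $\Sigma$ is $R$-invariant and the discrete iterates lie on $\Sigma$ itself.
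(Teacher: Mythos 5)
Your proof is correct and uses essentially the same mechanism as the paper's: the orbit meets $\Fix(R)$ at $y=\Pi^{-n}(x)$, so reversibility reflects the forward asymptotics to $O$ (coming from $x\in W^s_i$) into backward asymptotics, which the paper phrases at the level of the return map as $\Pi^{-2n}(x)=R(x)\in W^u_i$ while you phrase it at the level of the flow via $\varphi(-t,Ry)=R\varphi(t,y)$. The two formulations are interchangeable, and your handling of the second case (directly, or via $R\circ\Pi\circ R=\Pi^{-1}$) matches the paper's remark that it is analogous.
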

\begin{proof} We prove the case that $x\in W^s_i\cap
\Pi^n(\Fix(R))$. The other case is analogous. We have that
$\Pi^{-n}(x)\in \Fix(R)$ and hence, by the reversibility,
$\Pi^{-2n}(x)\in W^u_i$. Thus the orbit associated with $x$ is a
homoclinic orbit.
\end{proof}
If $\mathcal{R}$ is a measurable set of $ \Fix(R)\cap \mathcal{T}$ let us denote by $\mathcal{A(R)}$ its usual area.

\begin{prop} \label{lem5}
For any $k\geq 1$, $n\in \mathbb{Z}^{k-1}$ and $i\in
\{1,\dots,N\}^k$
%$n_1,\dots,n_{k-1}\in\mathbb{Z}$ and $i_1,\dots,i_{k} \in \{1,\dots,N\}$
there exist pairwise disjoints compact sets
diffeomorphic to a disc
%$$
%  D_{i_1\dots i_k j}^{n_1\dots n_{k-1}m} \subset D_{i_1}  \subset \Fix(R)   \quad
%  \text{for all $m\in\mathbb{Z}$ and $j=1,\dots, N$}
%$$
$$
  D_{ij}^{n m}   \subset \Fix(R)   \quad
  \text{for all $m\in\mathbb{Z}$ and $j=1,\dots, N$}
$$
such that
\begin{enumerate}[itemsep=0.1cm]
\item \label{item1} $D^{nm}_{ij} \subset D_{i}^{n}$,
\item \label{item2} $\Pi^{k}(D^{nm}_{ij}) \subset V_{j}$,
\item \label{item3} there is $q^{nm}_{i j} \in D^{n m}_{ij}$
such that $\Pi^{k}(q^{nm}_{ij}) \in W^s_j$ and
\item \label{item4} there is $\lambda<1$ such that
$\mathcal{A}(D^{nm}_{ij}$)<$\lambda\cdot$ $\mathcal{A}(D^{n}_{i}$).
%the area of $D^{nm}_{ij}$ is strictly less than the area of $D^{n}_{i}$.
\end{enumerate}

\end{prop}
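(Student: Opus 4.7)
I proceed by induction on $k\ge 1$, noting that the base case and the inductive step share the same mechanism. In both cases we start with a closed disc $D^n_i\subset\Fix(R)$ whose $(k-1)$-th return $\Pi^{k-1}(D^n_i)$ lies inside $V_{i_k}$ and is transverse to $W^s_{i_k}$ at a marked point $\Pi^{k-1}(q^n_i)$ (with the convention $q^n_i=q_i$ when $n$ is the empty tuple); this transversality, together with the non-degeneracy of the underlying homoclinic orbit (primary for $k=1$, secondary but still non-degenerate by Theorem~\ref{thmHart} for $k>1$), makes $\Pi^s\circ\Pi^{k-1}(D^n_i)$ transverse to $W^s_{loc}(O)\cap\Sigma^{in}_O$. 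Proposition~\ref{My_Prop5} then produces a spiralling sheet $\mathcal{S}\subset\Sigma^{out}_O$ accumulating on the whole circle $W^u_{loc}(O)\cap\Sigma^{out}_O$. Since the $C^{out}_j$ are pairwise disjoint neighborhoods of the $q^u_j$, for each $j$ the sheet enters $C^{out}_j$ in infinitely many connected strips indexed by the spiral turn $m\in\mathbb{Z}$; pulling each strip back through the local diffeomorphism $\Pi_O\circ\Pi^s\circ\Pi^{k-1}$ produces a closed sub-disc $D^{nm}_{ij}\subset D^n_i$ with $\Pi^k(D^{nm}_{ij})\subset V_j$. These sub-discs are pairwise disjoint (different $j$ because the $C^{out}_j$ are disjoint, different $m$ because spiral turns are disjoint), giving (i) and (ii); (iv) is obtained by shrinking each $D^{nm}_{ij}$ slightly, which is possible since infinitely many disjoint closed sub-discs must fit inside $D^n_i$.

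\textbf{Condition (iii).} From the identity $\Pi^s=R\circ(\Pi^u)^{-1}\circ R$ together with $R(W^s_{loc}(O))=W^u_{loc}(O)$ and $R(C^{in}_j)=C^{out}_j$, one checks $W^s_j=R(W^u_j)$, so $W^s_j$ is a smooth $1$-dimensional curve through $q_j$. Non-degeneracy of $\gamma_j$ forces $T_{q_j}W^s(O)\cap T_{q_j}W^u(O)$ to coincide with the flow direction, which is transverse to $\Sigma$; hence $T_{q_j}W^s_j$ and $T_{q_j}W^u_j$ are distinct $1$-dimensional lines in $T_{q_j}\Sigma$. The strip $\Pi^u(\mathcal{S}\cap C^{out}_j)\subset V_j$ wraps a tubular neighborhood of $W^u_j$ in the directions transverse to it, so for $|m|$ sufficiently large it crosses the transverse curve $W^s_j$ in at least one point. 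Pulling back through $\Pi^k$ gives $q^{nm}_{ij}\in D^{nm}_{ij}$ with $\Pi^k(q^{nm}_{ij})\in W^s_j$; by Lemma~\ref{lem4} the corresponding orbit is a reversible homoclinic orbit to $O$, and Theorem~\ref{thmHart} guarantees it is itself non-degenerate, which is precisely the hypothesis required to continue the induction with $D^{nm}_{ij}$ in place of $D^n_i$. The finitely many small-$|m|$ turns for which the intersection with $W^s_j$ may fail are discarded, and the remaining ones are re-indexed over $\mathbb{Z}$.

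\textbf{Main obstacle.} The delicate step is maintaining, throughout the iteration, the transversality of $\Pi^{k-1}(D^n_i)$ to $W^s_{i_k}$ at $\Pi^{k-1}(q^n_i)$, because $\Pi$ distorts discs drastically (the map $\Pi_O$ unfolds the $\phi$-directions logarithmically in $r_u^{in}$). This is controlled by shrinking each $D^{nm}_{ij}$ into an arbitrarily narrow neighborhood of $q^{nm}_{ij}$, around which the non-degeneracy of the secondary homoclinic orbit through $q^{nm}_{ij}$ supplied by Theorem~\ref{thmHart} keeps the hypothesis of Proposition~\ref{My_Prop5} valid at the next step of the induction.
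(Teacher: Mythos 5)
Your proposal follows essentially the same route as the paper: induction on $k$, feeding $\Pi^s\circ\Pi^{k-1}(D^n_i)$ into Proposition~\ref{My_Prop5} to obtain a spiralling sheet accumulating on $W^u_{loc}(O)\cap\Sigma^{out}_O$, pushing it through $\Pi^u$ so that it accumulates on each $W^u_j$, using the transversality of $W^s_j$ and $W^u_j$ at $q_j$ coming from \textbf{(P3)} to produce intersection points indexed by the turn $m$, pulling back small neighborhoods of these points to get the $D^{nm}_{ij}$, and shrinking to secure the area bound. The disjointness argument (disjoint $C^{out}_j$ for different $j$, disjoint turns for different $m$) is also the paper's.

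One step, however, is mis-justified. To continue the induction you need $\Pi^{k}(D^{nm}_{ij})$ to be transverse to $W^s_j$ at $\Pi^{k}(q^{nm}_{ij})$ --- equivalently, that the new homoclinic orbit through $q^{nm}_{ij}$ is non-degenerate --- and you derive this from Theorem~\ref{thmHart}. That theorem is an existence statement: it produces infinitely many reversible non-degenerate secondary homoclinics in the tubular neighborhood, but it does not assert that every homoclinic orbit arising in your construction, in particular the one through $q^{nm}_{ij}$, is among them or is non-degenerate. As used, the appeal is close to circular, since the non-degeneracy of precisely these orbits is what the inductive step is supposed to deliver. The paper obtains the required transversality directly from the geometry: the spiralling sheet $\mathcal{S}_{ij}$ accumulates in a $C^1$ fashion on $W^u_j$, and $W^u_j$ is transverse to $W^s_j$ inside $\Sigma$ by \textbf{(P3)}, so the sheet meets $W^s_j$ transversally at each $Q^m_{ij}$; a small neighborhood $A^m_{ij}$ of $Q^m_{ij}$ inside the sheet is then automatically a disc transverse to $W^s_j$, hence $\Pi^s(A^m_{ij})$ is a graph over a disc transverse to $W^s_{loc}(O)$, which is exactly the hypothesis of Proposition~\ref{My_Prop5} for the next step. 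You already have this ingredient in your phrase ``crosses the transverse curve $W^s_j$''; replacing the appeal to Theorem~\ref{thmHart} by this transversality closes the argument.
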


\begin{proof} As a consequence of Lemma~\ref{lem4} and
since $\gamma_i$ is a non-degenerate homoclinic orbit, the
one-dimensional curves $W^s_i$ and $W^u_i$ are both transverse at
$\{q_i\}=W^s_i\cap W^u_i$ to the two dimensional disc $D_i \subset
\Fix(R)$. Thus $\Pi^s(D_i)$ is a set diffeomorphic to
two-dimensional disc transverse to $W^s_{loc}(O)\cap C^{in}_i
\subset \Sigma_O^{in}$. Now we will proceed inductively.

According to Proposition~\ref{My_Prop5},
$\Pi_O(\Pi^s(D_i)\setminus \{q^s_i\})=\Pi_O\circ
\Pi^s(D_i\setminus \{q_i\})$ is a spiralling sheet accumulating on
$W^u_{loc}(O)\cap \Sigma^{out}_O$. Hence, $
\Pi(D_i\setminus\{q_i\})=\Pi^u\circ \Pi_O\circ
\Pi^s(D_i\setminus\{q_i\})$ contains a spiralling sheet
 in $\Sigma$ accumulating on $W^u_j$ for
$j=1,\dots,N$., which will be denoted by $\mathcal{S}_{ij}$.  See Figure~\ref{fig}.

\begin{figure}
~\vspace{-0.5cm}
\begin{center}
\ifpdf\includegraphics[scale=0.7]{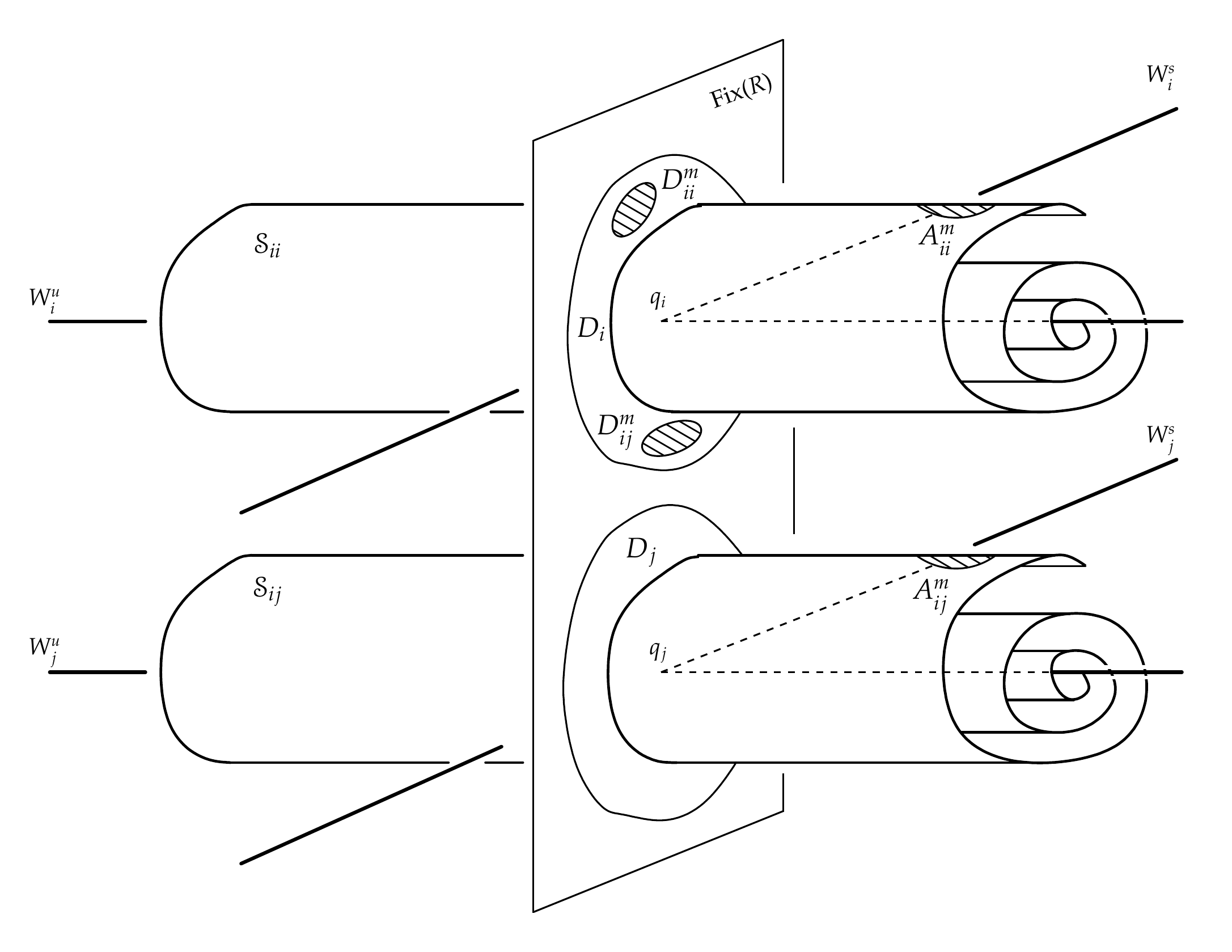} \else
\psscalebox{0.7}{\input{fig/espiral}} \fi
\end{center}
\caption{Spiralling geometry for the return map on section $\Sigma$. } \label{fig}
\end{figure}

From the quasi-transversality between $W^u_j$ and $W^s_j$ (Property \textbf{(P3)}), it
follows that $\mathcal{S}_{ij}$ transversally intersect $W^s_j$
infinitely many times. More precisely, we find a pair of
intersection points in each turn of the spiralling sheet. We
denote by $Q_{ij}^m$ these points in $W^s_j \cap \mathcal{S}_{ij}$
for $m\in \mathbb{Z}$.
%According to Lemma~\ref{lem4} the
%associated flow orbit to $P^n_{ij}$ is a reversible homoclinic
%orbit.
%Moreover
%since $P^n_{ij} \to q_j$ as $|n|\to \infty$ we can
%assume that
%$P^n_{ij}\in V_j$ for all $n\in \mathbb{Z}$.
From the transversality between $W^s_j$ and $\mathcal{S}_{ij}$ at
$Q^m_{ij}$, around these points in the
spiralling sheet, we can find pairwise disjoint small compact
neighborhoods $A^m_{ij} \subset V_j$, which are diffeomorphic to a disc, for all $m\in\mathbb{Z}$.
Let $$D^m_{ij}=\Pi^{-1}(A_{ij}^m) \subset D_i.$$ Then,
$D^m_{ij}$ are pairwise disjoint compact neighborhoods of
$q_{ij}^m=\Pi^{-1}(Q_{ij}^m)\in D^m_{ij}$. Moreover, by decreasing the Area of  $A^m_{ij}$ we can assume that
$\mathcal{A}(D^{nm}_{ij}$)<$\lambda\cdot$ $\mathcal{A}(D^{n}_{i}$), for some
$\lambda<1$. This proves the result for $k=1$.
Arguing by induction and repeating the above procedure on each set
$\Pi^s\circ\Pi^{k-1}(D^n_{i})$, which up to diffeomorphism is a
two-dimensional disc in $\Sigma^{in}_O$ transverse to
$W^s_{loc}(O)$, we may conclude the properties stated in the lemma.
\end{proof}

Let
\begin{equation}
\label{def_K}
   K=\bigcap_{k\in \mathbb{N}} \bigcup_{n\in\mathbb{Z}^{k-1}}
   \bigcup_{i \in \{1,\dots,N\}^k}  D_{i}^n
   \quad \text{and} \quad
   H = \{q^{n}_{i}: i \in \{1,\dots,N\}^k, n\in \mathbb{Z}^{k-1},  k\in \mathbb{N}\}.
\end{equation}
From Proposition~\ref{lem5} it follows that $K$ is an non-empty set and
$K\subset \overline{H}$.
%In fact $K$ is an uncountable set.
Moreover,
%using~\ref{item1} and~\ref{item4}
for any pair of sequences $i=(i_k)_{k\in\mathbb{N}}\in
\Sigma_N^+=\{1,\dots,N\}^\mathbb{N}$ and
$n=(n_k)_{k\in\mathbb{N}}\in \mathbb{Z}^\mathbb{N}$, there is a
unique point $x^n_i \in K$ such that \begin{equation}
\label{eq:superhomolcinic}
  \{ x^n_{i}\} = \bigcap_{k\in\mathbb{N}} D^{n_1\dots
  n_{k-1}}_{i_1\dots i_k}.
\end{equation}
\begin{rem} In fact, any point in $K$ is uniquely
identified as above
by a pair %of sequences
$(i,n)\in \Sigma^+_N
\times \mathbb{Z}^\mathbb{N}$. \end{rem}
%if $x\in K$ then there are sequences $(i_k)_{k\in
%\mathbb{N}}$ and $(n_k)_{k\in \mathbb{N}}$ with $i_k \in
%\{1,\dots, N\}$ and $n_k \in \mathbb{Z}$ such that
%$$
%    \{x\}=\bigcap_{k\in \mathbb{N}} D_{i_1\dots i_k}^{n_1\dots n_{k-1}}.
%$$
%Reciprocally,

Again, Proposition~\ref{lem5} and Equation~\eqref{eq:superhomolcinic}
imply that
$$
\Pi^k(x_i^n) \in V_{i_{k+1}} \quad \text{for all $k\geq 0$}.
$$
Since $x^n_i\in \Fix(R)$, then by the reversibility
we also have that $\Pi^{-k}(x_i^n)\in V_{i_{k+1}}$ for all $k\geq
1$.
%Thus $x$ is not an homoclinic orbit.
This proves the following:

\begin{prop} \label{prop:switching}
The homoclinic network $\Gamma_N=\gamma_1\cup\dots\cup \gamma_N$
is switching by reversible trajectories. Moreover, the starting
point of the  orbit realization can be taken in $K\subset
\Fix(R)$ (see \eqref{def_K}).

\end{prop}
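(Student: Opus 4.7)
The plan is to read off the conclusion directly from the construction preceding the statement, so the role of the proof is to carefully package what Proposition~\ref{lem5} and equation~\eqref{eq:superhomolcinic} already produce. First, I would fix an arbitrary sequence $\omega=(\omega_k)_{k\in\mathbb{N}}\in \Sigma_N^+$ and an arbitrary auxiliary sequence $n=(n_k)_{k\in\mathbb{N}}\in\mathbb{Z}^{\mathbb{N}}$ (for instance $n\equiv 0$). Applying Proposition~\ref{lem5} inductively yields a nested family $D^{n_1\dots n_{k-1}}_{\omega_1\dots \omega_k}\subset \Fix(R)$ of compact discs with strictly decreasing areas bounded by $\lambda^{k-1}\mathcal{A}(D_{\omega_1})$. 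The nested intersection theorem, together with item~\eqref{item4}, furnishes a unique point $x_\omega=x_\omega^n$ satisfying
\begin{equation*}
  \{x_\omega\}=\bigcap_{k\in\mathbb{N}} D^{n_1\dots n_{k-1}}_{\omega_1\dots \omega_k}\subset K\subset \Fix(R).
\end{equation*}

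Next, I would verify the forward realization. By item~\eqref{item2} of Proposition~\ref{lem5}, $\Pi^{k}(D^{n_1\dots n_{k-1}}_{\omega_1\dots\omega_k})\subset V_{\omega_{k+1}}$, hence $\Pi^{k}(x_\omega)\in V_{\omega_{k+1}}$ for every $k\geq 0$. This is exactly the forward switching property for the sequence $\omega$ relative to the cross-section decomposition $V=V_1\cup\dots\cup V_N$, i.e., $\Pi^k(x_\omega)$ visits $V_{\omega_{k+1}}$ as required by the definition recalled in the introduction.

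For the backward realization I would exploit that $\Pi$ is reversible, $R\circ \Pi\circ R=\Pi^{-1}$, together with the fact that each $V_i$ was chosen $R$-invariant in Section~\ref{transitions}. Since $x_\omega\in\Fix(R)$,
\begin{equation*}
 \Pi^{-k}(x_\omega)= (R\circ\Pi\circ R)^{k}(x_\omega)=R\bigl(\Pi^{k}(R(x_\omega))\bigr)=R\bigl(\Pi^{k}(x_\omega)\bigr)\in R(V_{\omega_{k+1}})=V_{\omega_{k+1}}
\end{equation*}
for every $k\geq 1$. Thus the whole orbit $\{\Pi^m(x_\omega):m\in\mathbb{Z}\}$ follows the prescribed itinerary and is $R$-symmetric. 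Finally, the tubular neighborhood requirement is satisfied because the discs $D^{n_1\dots n_{k-1}}_{\omega_1\dots\omega_k}$ can be taken arbitrarily small from the start by shrinking the sets $A^{m}_{ij}$ in the proof of Proposition~\ref{lem5}, so the trajectory lies in any prescribed neighborhood of $\Gamma_N$.

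There is no real obstacle here beyond bookkeeping: the chain of nested discs in $\Fix(R)$ is already given by Proposition~\ref{lem5}, and $R$-invariance of the $V_i$ makes the transfer from forward to backward itineraries automatic. The one subtle point worth being explicit about is that realizing every $\omega\in\Sigma_N^+$ requires that the index sets in the construction of $K$ range over all of $\{1,\dots,N\}^k$, which is exactly what the statement of Proposition~\ref{lem5} permits; all other properties are routine consequences of the nested compact set argument and the reversibility relation $R\circ\Pi\circ R=\Pi^{-1}$.
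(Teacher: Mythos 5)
Your proposal is correct and matches the paper's own argument essentially verbatim: the paper also obtains the starting point as the nested intersection $\{x^n_i\}=\bigcap_k D^{n_1\dots n_{k-1}}_{i_1\dots i_k}\subset K\subset\Fix(R)$ from Proposition~\ref{lem5}, reads off the forward itinerary from item~\eqref{item2}, and deduces the backward itinerary from $R\circ\Pi\circ R=\Pi^{-1}$ together with $x^n_i\in\Fix(R)$ and the $R$-invariance of the $V_i$. No substantive difference.
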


The next lemma will prove that the flow orbit associated with the
point in $K$ approaches the homoclinic network $\Gamma_N$ in forward
time (by reversibility this also holds in backward time). To shorten the
notation, we will say that a sequence $(y_n)_n$ converges to a set
$A$ if  the distance between $y_n$ and $A$ goes to zero as $n\to
\infty$.

\begin{lem} \label{lem:superhomolcinic}
If $x\in K$ then $\Pi^k(x)$ converges to $\{q_1,\dots,q_N\}$ as
$|k|\to \infty$.
\end{lem}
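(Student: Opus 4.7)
Fix $x = x_i^n \in K$ with code $(i, n) \in \Sigma_N^+ \times \mathbb{Z}^{\mathbb{N}}$. The plan is to establish $d(\Pi^k(x), q_{i_{k+1}}) \to 0$ as $k \to +\infty$, and to deduce the case $k \to -\infty$ from reversibility. First I would strengthen the inductive construction of Proposition~\ref{lem5} so that, at the $k$th level, the compact neighborhoods $A^{n_1\dots n_k}_{i_1\dots i_{k+1}} \subset V_{i_{k+1}}$ are additionally required to have diameter at most $\varepsilon_k$, for some sequence $\varepsilon_k \to 0$; this is harmless, since the $A$'s may be shrunk at will while remaining pairwise disjoint and containing the intersection points $Q^{n_1\dots n_k}_{i_1\dots i_{k+1}} \in W^s_{i_{k+1}}$. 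Since $\Pi^k(x) \in A^{n_1\dots n_k}_{i_1\dots i_{k+1}}$, this immediately gives
$$
  d\bigl(\Pi^k(x),\, W^s_{i_{k+1}}\bigr) \leq \varepsilon_k \longrightarrow 0.
$$

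Next I would propagate this closeness through the local passage near $O$. Applied at level $k-1$, the previous bound reads $d(\Pi^{k-1}(x), W^s_{i_k}) \to 0$. Since $\Pi^s$ is a diffeomorphism sending $W^s_{i_k}$ onto $W^s_{loc}(O)\cap C^{in}_{i_k}$, the point $\Pi^s(\Pi^{k-1}(x)) \in \Sigma^{in}_O$ lies close to $W^s_{loc}(O) = \{r_u = 0\}$, so its coordinate $r_u^{in}$ is arbitrarily small. The formula \eqref{local_flow_eq} then yields $r_s^{out} = r_u^{in}$ small for the image under $\Pi_O$, hence the exit point in $\Sigma^{out}_O$ is close to $W^u_{loc}(O) = \{r_s = 0\}$. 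By the switching property, this exit lies in $C^{out}_{i_{k+1}}$; applying the diffeomorphism $\Pi^u$ therefore gives
$$
  d\bigl(\Pi^k(x),\, W^u_{i_{k+1}}\bigr) \longrightarrow 0.
$$
Since $W^s_{i_{k+1}}$ and $W^u_{i_{k+1}}$ are smooth one-dimensional arcs meeting quasi-transversally at $q_{i_{k+1}}$ inside the three-dimensional section $\Sigma$ (by Property~\textbf{(P3)}), their tangent spaces at $q_{i_{k+1}}$ span a 2-plane; in local straightening coordinates any point $\delta$-close to both arcs lies within $O(\delta)$ of the intersection. Combining the two displays yields $d(\Pi^k(x), q_{i_{k+1}}) \to 0$.

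Finally, for backward iterates, recall that $K \subset \Fix(R)$ by construction and that the reversibility relation $R \circ \Pi \circ R = \Pi^{-1}$ gives $\Pi^{-k}(x) = R(\Pi^k(x))$ for $x \in \Fix(R)$. Since $R$ is a linear isometry fixing each $q_j$, the forward convergence transfers immediately to backward convergence. The most delicate point will be the second step: converting proximity to $W^s_{i_k}$ \emph{before} the local passage into proximity to $W^u_{i_{k+1}}$ \emph{after} it. This rests on the exact identity $r_s^{out} = r_u^{in}$ from the linearization, which preserves smallness of the radial coordinate across the saddle, combined with the switching constraint that pins down the correct domain $C^{out}_{i_{k+1}}$ on which to apply $\Pi^u$.
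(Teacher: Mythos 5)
Your argument is correct and follows essentially the same route as the paper's proof: shrink the diameters of the nested neighborhoods $A_k$ so that $\Pi^k(x)$ is forced close to $W^s_{i_{k+1}}$, use the identity $r_s^{out}=r_u^{in}$ from the linearized local map to convert smallness of the incoming radial coordinate into proximity of the exit to $W^u_{loc}(O)$ (hence of $\Pi^k(x)$ to $W^u_{i_{k+1}}$ after applying $\Pi^u$), conclude via the transverse intersection $W^s_j\cap W^u_j=\{q_j\}$, and handle backward time by reversibility. You make explicit the final step (closeness to both transverse arcs implies closeness to $q_j$) that the paper leaves implicit, but the decomposition and the key mechanism are the same.
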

\begin{proof}
By reversibility, it suffices to prove the lemma for $k\to \infty$. Let $x\in K$ and
since $K$ is identified with $\Sigma^+_N\times
\mathbb{Z}^\mathbb{N}$, by means of~\eqref{eq:superhomolcinic} we
get the sequences $i=(i_k)_{k\in\mathbb{N}}\in\Sigma_N^+$ and
$n=(n_k)_{k\in\mathbb{N}}\in\mathbb{Z}^\mathbb{N}$ such that
$$
\Pi^{k-1}(x)\in A_k \eqdef \Pi^{k-1}\left(D^{n_1\dots
n_{k-1}}_{i_1\dots i_{k}}\right) \quad \text{for all $k\geq 1$}.
$$
%For abbreviation, write
%$$
%p_k\eqdef q_{i_1\dots i_k}^{n_1\dots n_{k-1}} \quad \text{and}
%\quad B_k \eqdef D_{i_1\dots i_k}^{n_1\dots n_{k-1}}.
%$$

By the inductive construction, $A_k$ is a small
compact neighborhood of $\Pi^k(q_{i_1\dots i_k}^{n_1\dots n_{k-1}})$, which is diffeomorphic to a
two-dimensional disc transverse to $W^s_{i_{k}}$ in a spiralling
sheet accumulating on $W^u_{i_{k}}$. Moreover, we can assume that
the radius $r^{in}_k$ of the disc $\Pi^s(A_{k})$ in
$\Sigma^{in}_O$ goes to zero as $k\to \infty$.
According to~\eqref{local_flow_eq}, the radius of the
spiraling sheet
$$
\mathcal{S}_{k}=\Pi_O\circ \Pi^s(A_{k}\backslash W^s(O))
$$
in $\Sigma^{out}_O$ is $r^{out}_k=r^{in}_k$. Thus, $r^{out}_k\to
0$ as $k\to \infty$ and this sequence of spiralling sheets
converges to $W^u_{loc}(O)$. This implies that the
spiralling sheets in $\Pi^u(\mathcal{S}_{k})$ converge to $W^u_{1}\cup
\dots \cup W^u_N$. Therefore, $A_k$ converges to
$\{q_1,\dots,q_N\}$ as $k\to \infty$ and consequently,
$\Pi^{k-1}(x)$ converges to $\{q_1,\dots q_N\}$.
\end{proof}

\section{Proof of the main results}
\label{final} Let $\mathcal{P}$ be the set of points $x\in V\cap
\Fix(R)$ such that $\Pi^n(x)=x$ for some $n\geq 1$ where
$V=V_1\cup \dots \cup V_N$. Finally, we will prove our main
results:

\begin{proof}[Proof of Theorem~\ref{thmB}]
From Proposition~\ref{prop:switching} and
Lemma~\ref{lem:superhomolcinic} we conclude that $\Gamma_N$
exhibits symmetric super-homoclinic switching. It remains to show
symmetric homoclinic and periodic switching.

According to Lemma~\ref{lem4} and Lemma~\ref{lem5} the flow orbit
associated to any point $p\in H$ is a reversible homoclinic orbit.
For any $k\geq 1$, $n=(n_1,\dots,n_{k-1})\in \mathbb{Z}^{k-1}$ and
$i=(i_1,\dots,i_k)\in \{1,\dots,N\}^k$ we find a point $q=q^n_i
\in H$ such that
$$
 \Pi^j(q) \in V_{i_{|j|+1}}  \quad \text{for
  $|j| \leq k-1$,  \quad}
  \Pi^{k-1}(q)\in W^s_{i_k} \quad \text{and} \quad \Pi^{-(k-1)}(q)\in
  W^u_{i_k}.
$$
Thus, any prescribed finite path is realized by a reversible
homoclinic orbit starting in $\Fix(R)$, and so $\Gamma_N$ exhibits
symmetric homoclinic switching.

Finally, according to Theorem~\ref{thmHart}, each point $q^n_i\in
H$ is accumulated by a one-parameter family $(p_a)_a$ of periodic
points $p_a \in P$.  In particular $H\subset \overline{P}$.
 % for all $a\in [0,\infty)$.
 Moreover, these periodic orbits follow the homoclinic
orbit starting at $q^n_i$ around a turn before closing. Thus
$$
  \Pi^j(p_a) \in V_{i_{|j|+1}} \quad |j|\leq k-1, \quad \text{and} \quad
  \Pi^{k-1}(p_a)=\Pi^{-(k-1)}(p_a),
$$
proving that $\Gamma_N$ exhibits symmetric periodic switching.
\end{proof}
%and~\ref{lem5}, the flow orbit of any point $p^n_{\iota}$ in $H$
%is a reversible homoclinic orbit. This orbit

\begin{proof}[Proof of Theorem~\ref{thmA}]
Consider $\Lambda=K\cup  \mathcal{P}$ and let
$$\Lambda_N=\bigcup_{i\in\mathbb{Z}} \Pi^i(\Lambda).$$
Clearly $\Lambda_N$ is a $\Pi$-invariant set. Since $H\subset
\overline{\mathcal{P}}$ and $K\subset \overline{H}$, then $\Lambda\subset
\overline{\mathcal{P}}$, and so the  set of periodic
points is dense in $\Lambda_N$. It remains to prove that
$\Pi|_{\Lambda_N}$ is semi-conjugated to the shift $\sigma$ on the
set $\Sigma^+_N=\{1,\dots,N\}^\mathbb{N}$.

 We denote
by $\pi: \Lambda_N \to \Sigma_N^+$ the coding map defined by
$$
  \pi(x)=(i_n)_{n\in\mathbb{N}} \in \Sigma^+_N \quad
  \text{if} \ \ \Pi^k(x)\in V_{i_{k+1}} \ \ \text{for all
  $k\geq 0$.}
$$
Clearly $\pi$ satisfies that $\pi \circ \Pi= \sigma\circ \pi$.
From Theorem~\ref{thmB}, the network $\Gamma_N$ is switching and
one can take the starting point of the realization in
$K\subset\Lambda_N$. This implies that $\pi$ is onto (surjective). It is left to see
that $\pi$ is also a continuous map.

Fix $x\in \Lambda_N$, $M\in \mathbb{N}$ and for $z\in\Lambda_N$ denote by $\pi(z)=(j_n)_{n\in\mathbb{N}}$.
It is enough to show that there exists $\delta>0$ such that
if $d(z,x)<\delta$, then $j_n=i_n$ for all $n\leq M$.
Now there exists a $\delta_1>0$ such that, if $d(z,x)<\delta_1$ then $z \in V_{i_1}$, and thus
$j_1=i_1$.  Since the return map $\Pi$ is continuous at $x$, we may
find a $\delta_2$ such that if $d(z,x)<\min\{\delta_1,\delta_2\}$
then $d(\Pi(z),\Pi(x))<\delta_1$. As above this implies that
$j_1=i_1$ and $j_2=i_2$. Proceeding inductively until the $n$-th
iterate of $\Pi$ we obtain
$\delta=\min\{\delta_1,\delta_2,\dots,\delta_n\}>0$ as required.
This completes the proof of the semi-conjugation between
$\Pi:\Lambda_N\to \Lambda_N$ and $\sigma:\Sigma^+_N\to\Sigma_N^+$.
%To do this, we are going to
%show the following.  Fixed $x\in \Lambda_N$ and $n\in \mathbb{N}$
%there exists $\delta>0$ such that if the distance between $z\in
%\Lambda_N$ and $x$ is less than $\delta$ then the $n$-th first
%coordinates of $\pi(z)=(j_n)_{n\in\mathbb{N}}$ and
%$\pi(x)=(i_n)_{n\in\mathbb{N}}$ coincide. Recall that
%$V_1,\dots,V_N$ are compact pairwise disjoint sets. Thus we find
%$\eta>0$ so that $V_i$ is $\eta$-far from $V_j$ for all $i\not
%=j$.  If the distance between $z$ and $x$ is less that
%$\delta_1=\eta/2$ it follows that $z \in V_{i_1}$ and thus
%$j_1=i_1$. Since the return map $\Pi$ is continuous at $x$, we
%find $\delta_2$ such that if $d(z,x)<\min\{\delta_1,\delta_2\}$
%then $d(\Pi(z),\Pi(x))<\delta_1$. As above this implies that
%$j_1=i_1$ and $j_2=i_2$. Proceeding in this way until the $n$-th
%iterate of $\Pi$ we get
%$\delta=\min\{\delta_1,\delta_2,\dots,\delta_n\}>0$ as required.
%This completes the proof of the semi-conjugation between
%$\Pi:\Lambda_N\to \Lambda_N$ and $\sigma:\Sigma^+_N\to\Sigma_N^+$.
\end{proof}

%\section{Examples in applied contexts}
%\label{examples}
%
%There are still only a few examples where the existence of a reversible homoclinic orbit has been proved. In this short section, we address the reader for some references where this type of reversible homoclinic structure has been proved. %For the case without  symmetry, we suggest the reader to \cite[\S 8]{IbRo}.
%
%In \cite{IK} , a partial differential equation is reduced to a four-dimensional
%reversible system using normal form theory. Homoclinic cycles of this equation correspond to
%solitary waves in the original problem ($n$-homoclinic orbits are solitary waves with $n$-humps).
%
%Another example is due to \cite{CT} who proved the existence of an differential equation whose flow has a cycle staisfying \textbf{(P1)-(P3)}. Numerical
%calculations performed on this equation confirm the results on subsidiary homoclinic solutions and show that
%the homoclinic solutions can be continued over a large
%parameter regime until they coalesce in pairs.

\subsection*{Acknowledgements}
P.~G.~Barrientos was supported by MTM2017-87697-P from Ministerio
de Econom\'ia y Competividad de Espa\~na and CNPQ-Brasil.  A. Rodrigues aknowledges
financial support from Program INVESTIGADOR FCT (IF/00107/2015)
and Centro de Matem\'atica da Universidade do Porto. Centro de
Matem\'atica da Universidade do Porto (CMUP -- UID/MAT/00144/2013)
is funded by FCT (Portugal) with national (MEC) and European
structural funds through the programs FEDER, under the partnership
agreement PT2020. Also, part of this work has been written during the stay of A. Rodrigues in Nizhny Novgorod University, supported by the grant RNF 14-41-00044.

\bibliographystyle{alpha}

%\bibliography{br-bib}

\end{document}